\documentclass[oneside, 11pt,a4paper]{amsart}

\usepackage{stmaryrd}
\usepackage{subcaption}
\usepackage{graphicx} 
\usepackage{amssymb} \usepackage{amsfonts} \usepackage{amsmath}
\usepackage{amsthm} \usepackage{epsfig} 
\usepackage{color}
\usepackage[T1]{fontenc}
\usepackage{pinlabel}
\usepackage{amscd}
\usepackage[all]{xy}
\usepackage{pinlabel}
\usepackage{mathrsfs}  
\usepackage[most]{tcolorbox}
\usepackage{hyperref}
\usepackage{float}
\usepackage{tikz}
\usetikzlibrary{decorations.pathreplacing, backgrounds, calc}
\usepackage{geometry}
\usepackage{tikz-cd}
\usepackage[shortlabels]{enumitem}
\usepackage{bm}
\usepackage{xfrac}
\usepackage{cleveref}
\usepackage[]{mdframed}
\usepackage{lmodern}
\usepackage{mathtools}

\newtheorem{thm}{Theorem}[section]

\newtheorem{lem}[thm]{Lemma}

\newtheorem{corollary}[thm]{Corollary}
\newtheorem{prop}[thm]{Proposition}

\newtheorem{conj}{Conjecture}
\theoremstyle{definition}
\newtheorem{defn}[thm]{Definition}

\newtheorem{remark}[thm]{Remark}

\newcommand{\Z}{\ensuremath {\mathbb{Z}}}
\newcommand{\Q}{\ensuremath {\mathbb{Q}}}

\newtheoremstyle{namedd}
  {}{}              
  {\itshape}        
  {}                
  {\bfseries}       
  {.}               
  { }               
  {#1 #2\thmnote{ (#3)}} 
\theoremstyle{namedd}

\newtheorem{namedassumption}{Assumption}

\DeclareMathOperator{\id}{{id}}

\newcommand{\ds}[1]{{\color{blue}Diego: #1}}
\newcommand{\hz}[1]{{\color{red}Alice: #1}}

\newtcbtheorem{Chapter}{\bfseries Chapter}{enhanced,drop shadow={black!50!white},
  coltitle=black,
  top=0.3in,
  attach boxed title to top left=
  {xshift=1.5em,yshift=-\tcboxedtitleheight/2},
  boxed title style={size=small,colback=pink}
}{chapter}

\author{Alice Merz}
\author{Diego Santoro}

\address{HUN-REN Alfréd Rényi Institute of Mathematics, Reáltanoda utca 13-15, 1053, Budapest, Hungary}
\email{\href{mailto:alicemerz@renyi.hu}{alicemerz@renyi.hu}}
\email{\href{mailto:diego.santoro95@gmail.com}{diego.santoro95@gmail.com}}

\title{Zero-surgery characterises Whitehead doubles}

\begin{document}

\begin{abstract}
We show that zero is a characterising slope for the untwisted Whitehead double of any knot $K$. More generally, this holds for all $n$-twisted Whitehead doubles of $K$, with the assumption that $n\ne pq$ if $K$ is a non-trivial $(p,q)$-torus knot or $(p,q)$-cable. In particular, zero is a  characterising slope for all twist knots.
\end{abstract}

\maketitle
\section{Introduction}
A slope \(r\in \Q\) is said to be a \emph{characterising slope} for a knot \(K\subset S^3\) if $K$ is uniquely determined by the orientation-preserving diffeomorphism type of its $r$-surgery, which we denote $S^3_{r}(K)$. In other words, $r$ is a characterising slope for $K$ if $S^3_{r}(K)\cong S^3_{r}(K')$ implies that $K$ and $K'$ are isotopic. 

Every slope is characterising for the unknot \cite{KMOS07} as well as for the trefoil and the figure-eight knot \cite{OSDehnsur19}, and in general every knot $K$ has infinitely many characterising slopes \cite{Lac19}. On the other hand, there are examples of knots with non-characterising slopes, such as the hyperbolic two-bridge knot $8_6$, for which every integer slope is non-characterising. More generally, for each rational $r\in \Q$, there exists a pair of distinct knots with orientation-preserving diffeomorphic $r$-surgeries \cite{HPW25}.

In this paper we focus on the case of $0$-surgeries. This case has interesting relations with $4$-dimensional aspects of knot theory. It is well known that a knot $K$ is slice if and only if the trace of $K$ smoothly embeds in $S^4$ (this fact is sometimes referred to as the \emph{trace embedding lemma}, see for example \cite{KM78}), and this allows to certify the sliceness (or non-sliceness) status of a knot $K$ by finding a distinct knot $K'$ with the same trace whose status is known. Famously, this technique was used to show that the Conway knot is not slice \cite{Pic20}.
A problem that has received considerable attention is the following conjecture, appearing as Problem~1.39 in \cite{kirby1997problems} and Problem~1.40(b) in the K3 problem list \cite{K3}.  

\begin{conj}\label{conj}
The (untwisted) Whitehead double of a knot $K$ is slice if and only if $K$ is slice.
\end{conj}

A possible way to disprove this conjecture could be to look for slice knots having the same trace of the Whitehead double of a non-slice knot. Since $S^3_0(K)$ is the boundary of the trace of $K$, the main theorem of this paper shows that this strategy cannot work\footnote{See \cite{kegel2026search}, for related considerations on the (im-)possibility of applying the specific trick Piccirillo uses in \cite{Pic20}.}.

\begin{thm}\label{thm: main theorem}
    Let $W_K$ be the untwisted positive Whitehead double of \(K\subset S^3\). Then zero is a characterising slope for $W_K$.
\end{thm}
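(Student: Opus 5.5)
The plan is to show that any knot $K'$ with $S^3_0(K')\cong S^3_0(W_K)$ must itself be a satellite with pattern the Whitehead double pattern and companion $K$. We use the JSJ decomposition of the surgered manifold together with Heegaard Floer constraints. If $K$ is the unknot, $W_K$ is the unknot and the claim is \cite{KMOS07}; assume $K$ is nontrivial.

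\textbf{Step 1: numerical invariants of $K'$.} By Gabai, $S^3_0(K')$ determines the genus, so $g(K')=g(W_K)=1$. Knot Floer homology of $S^3_0$ (via the Ozsváth--Szabó surgery formula for $HF^+$ of zero surgery) determines fiberedness. Since $W_K$ is not fibered when $K$ is nontrivial, $K'$ is a non-fibered genus one knot. The Alexander polynomial is determined by the Alexander module of $S^3_0$, so $\Delta_{K'}=\Delta_{W_K}=1$. Finally, $\widehat{HFK}(K',1)$ is determined by $HF^+(S^3_0(K'),\text{top spin}^c)$. For Whitehead doubles this group is computed by Hedden's formula from $\widehat{HFK}(K)$.

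\textbf{Step 2: geometric decomposition.} The torus $T=\partial N(K)$ survives as an essential torus in $S^3_0(W_K)$. Indeed, the Whitehead pattern solid torus $V$ becomes, after $0$-surgery on the pattern, an irreducible manifold with incompressible boundary (Gabai's results on surgery in solid tori, since $W$ has winding number zero and is not a $0$-bridge/1-bridge braid). So $S^3_0(W_K)=E(K)\cup_T V_0$, where $V_0$ is the result of $0$-surgery on the Whitehead pattern in $V$. This is a fixed manifold: the complement of the Whitehead link with $0$-filling, which is a genus one fibered-type Seifert/hyperbolic piece that we identify explicitly. Hence $S^3_0(K')$ contains an essential torus with $E(K)$ on one side.

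\textbf{Step 3: $K'$ is a satellite.} If $K'$ were hyperbolic or a torus knot, we use the results of Gabai and Gordon--Luecke type, in the form that essential tori in $S^3_0(K')$ for genus one knots arise from (a) tori already in $E(K')$, or (b) exceptional surgeries with toroidal result. Case (b) for genus one hyperbolic knots is classified by Gordon/Eudave-Muñoz, and leads to specific twist-knot and pretzel families. These are excluded by comparing $\widehat{HFK}$ or the JSJ pieces computed in Step 2. So $K'$ has a companion torus $T'$ bounding $V'\supset K'$, and since $g(K')=1$, Schubert's genus inequality forces the winding number to be $0$ and the pattern to have genus one.

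\textbf{Step 4: matching the pieces.} The pattern in $V'$ becomes a genus one, winding number zero knot whose $0$-surgery in the solid torus matches $V_0$ via uniqueness of the JSJ decomposition. The hardest step is to conclude from this that the pattern is the Whitehead pattern (up to twisting and clasp sign) and the companion is $K$. We first try to argue via JSJ uniqueness that the homeomorphism sends $E(K')$'s companion piece to $E(K)$, and $V'_0$ to $V_0$. Then:
\begin{itemize}
\item Gordon--Luecke for knot complements gives that the companions agree, $K\simeq C'$.
\item The twisting parameter is read off from how the slope meridian of $K$ sits on $T$. This uses that $T$'s framing is fixed by the homology of $S^3_0$, where $T$ bounds $E(K)$ with a well-defined longitude.
\item Sign and twisting are then pinned down by the $\tau$ or $d$-invariant data of the top $\widehat{HFK}$ group from Step 1.
\end{itemize}
The main obstacle is identifying $V'$'s pattern: we must show a winding-zero genus one pattern whose zero-surgery in the solid torus is homeomorphic to that of the Whitehead pattern is the Whitehead pattern. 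Here the plan is to apply Gabai's sutured manifold theory to the Seifert surface in $V'$, showing the pattern is a clasp pattern.
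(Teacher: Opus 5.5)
Your proposal has a genuine gap. Steps 3 and 4 together carry the whole theorem, and neither is established.

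\textbf{Step 3 rests on a false premise.} Since $g(K')=1$, Gabai's theorem makes the capped Seifert surface $\widehat{\Sigma}$ an incompressible non-separating torus in $S^3_0(K')$. So zero-surgery on \emph{every} nontrivial genus-one knot, hyperbolic or not, is toroidal. There is no classification of such integral toroidal surgeries; the Gordon--Luecke and Eudave-Mu\~noz results concern non-integral toroidal slopes. Nothing in your argument therefore excludes a hyperbolic $K'$, and your dichotomy (a)/(b) is vacuous. What actually has to be shown is that the separating torus corresponding to $\partial E_K$ can be isotoped off the dual curve of $K'$, and that is essentially as hard as the theorem itself.

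\textbf{Step 2 misdescribes $S^3_{0,\bullet}(W)$.} It is neither a Seifert nor a hyperbolic piece. It contains the non-separating JSJ torus $F$ (the capped Seifert surface of the pattern), and cutting along $F$ gives $P\times S^1$.

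\textbf{Step 4 is incomplete.} It needs the companion torus of $K'$ to be essential in $S^3_0(K')$ and to correspond to $\partial E_K$. It then leaves the recognition of the pattern, which you yourself flag as the main obstacle, as a one-line appeal to sutured manifold theory. The Floer-theoretic data of Step 1 help with neither point.

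\textbf{The missing idea is to work with $F$ rather than with $\partial E_K$.} The paper's argument runs as follows.
\begin{enumerate}
\item From the JSJ decomposition $\mathcal{T}\cup\{\partial E_K,F\}$, the paper shows that $F$ is the unique incompressible non-separating torus up to isotopy. Hence the homeomorphism carries $\widehat{\Sigma}$ to $F$.
\item Cutting $S^3_0(K')$ along $\widehat{\Sigma}$ then gives the exterior of the $(2,0)$-cable $L_K$ of $K$. The dual curve of $K'$ becomes a properly embedded arc $\alpha$.
\item Regluing shows that banding $L_K$ along a band with core $\alpha$ yields a knot $\widetilde{K}$ with $S^3_1(\widetilde{K})\cong S^3$. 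By Gordon--Luecke, $\widetilde{K}$ is the unknot, and $K'$ is the image of the meridian of the band.
\item The key new ingredient is Theorem~\ref{thm: band in annulus}: an oriented banding of $L_K$ producing the unknot has its band isotopic into a cabling annulus. Its proof uses Gabai's result that a norm-minimising surface stays norm-minimising in all but at most one Dehn filling, applied to $\widetilde{K}$ together with a meridian of the band. It also uses the classification of incompressible annuli in $E_{L_K}$.
\end{enumerate}
This shows at once that $K'$ is a satellite of $K$ and that its pattern is the Whitehead pattern, so it does the work of your Steps 3 and 4 in a single argument.
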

Note that Theorem~\ref{thm: main theorem} implies, by taking mirrors, that zero is a characterising slope for (untwisted) negative Whitehead doubles as well.  Characterising slopes for Whitehead doubles were studied in \cite{Wak}, with a focus on slopes $r=\sfrac{p}{q}$ for $q$ large enough.

Indeed, Theorem \ref{thm: main theorem} follows from a more general detection result for twisted Whitehead doubles. For $n\in \mathbb{Z}$, and $K\subset S^3$, we denote by $W^n_{K}$ the $n$-twisted positive Whitehead double of $K$.

\begin{thm}\label{thm: main thm gen}
Assume $K\subset S^3$ is not a $(p,q)$-torus knot nor a $(p,q)$-cable knot. Then, for each $n\in \Z$, zero is a characterising slope for $W^n_{K}$. If $K$ is a $(p,q)$-torus knot or a  $(p,q)$-cable, then the same holds when $n\ne pq$.
\end{thm}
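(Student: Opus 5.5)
Suppose $S^3_0(K')\cong S^3_0(W^n_K)$ for some knot $K'\subset S^3$. The plan is to read off the genus of $K'$ and whether it is fibred, then pin down $K'$ using the JSJ decomposition of the zero-surgery.

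First, the Floer-theoretic constraints. Zero-surgery detects the Seifert genus (Gabai, via the Thurston norm on $H_2$), and it detects fibredness (Gabai, and Ni). Since $W^n_K$ has genus one and, for nontrivial $K$, is not fibred, $K'$ must also be a genus one knot that is not fibred. When $K$ is the unknot, $W^n_K$ is a twist knot and the argument should be run with the known results on genus one knots; I expect this case to be handled separately. In addition, $HFK$ of $K'$ in the top Alexander grading is determined by $HF^+$ of the zero-surgery in the extremal $\mathrm{Spin}^c$ structure. This gives $\widehat{HFK}(K',1)\cong\widehat{HFK}(W^n_K,1)$, whose rank (Hedden's computation) encodes $\widehat{HFK}(K)$ and $n$.

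Second, the geometric step. Let $V$ be the pattern solid torus. The exterior of $W^n_K$ is $E(K)\cup_T(V\setminus N(W))$, and $V\setminus N(W)$ is the Whitehead link exterior, which is hyperbolic. For nontrivial $K$, the torus $T$ is incompressible in the zero-surgery: zero-filling the Whitehead link exterior along the double gives a manifold with incompressible boundary, since the result is not a solid torus by Gabai's theorem on surgery on knots in solid tori. The JSJ decomposition of $Y=S^3_0(W^n_K)$ therefore contains $E(K)$ (together with its own JSJ pieces) and a piece coming from the zero-filled Whitehead link exterior.

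Now apply the same analysis to $K'$. Its genus one Seifert surface caps off to a nonseparating torus $\hat F$ in $Y$, and because $K'$ is non-fibred, $Y$ cut along $\hat F$ is not a product. The idea is to compare the JSJ torus $T$ with a genus-minimising torus in the class generating $H_2(Y)$. For genus one knots such tori are unique up to isotopy within the "guts" in a suitable sense, so $T$ can be isotoped off $\hat F$. It would then follow that $T$ bounds a copy of $E(K)$ on one side in $S^3\setminus K'$, so $K'$ is a satellite with companion $K$ (or its JSJ pieces) and pattern a genus one pattern of winding number zero. Genus one satellites of winding number zero whose patterns have wrapping number two are exactly twisted Whitehead doubles, possibly negative ones. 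So $K'=W^{m,\pm}_K$, or possibly $K'$ has companion some other knot $K''$ with $E(K'')\cong E(K)$, which forces $K''=K$ by Gordon--Luecke.

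Finally, compare invariants to recover the sign and twisting. The rank of $\widehat{HFK}$ in the top grading, together with $d$-invariants or the structure of $HF^+$ of the zero-surgery, should determine the clasp sign and the twist parameter $m$ via Hedden's formulas, in which $m$ enters through its relation to $2\tau(K)$. A residual ambiguity remains: the zero-filled Whitehead pattern may admit an extra symmetry, or the companion piece may be Seifert fibred. This happens when $K$ is a torus knot or a cable and $n=pq$, because then the cabling slope matches the surgery slope and the JSJ gluing can be reparametrised. This is exactly the excluded case.

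The main obstacle is the geometric step: showing that the JSJ torus of $Y$ can be made disjoint from the capped-off Seifert torus of $K'$, so that it lies in the knot exterior of $K'$. The first approach I would try is sutured manifold decomposition of $Y$ cut along $\hat F$, analysing the guts of the genus one surface.
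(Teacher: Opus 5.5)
There is a genuine gap in the geometric step, though not quite where you locate it. Making the companion torus $T=\partial E_K$ disjoint from the capped torus $\widehat F$ is the comparatively easy part. The zero-filled Whitehead exterior is itself $P\times S^1$ glued to itself along a torus $F$. A JSJ argument then shows that every incompressible non-separating torus in $Y$, in particular $\widehat F$, is isotopic to $F$. This is where $n\neq pq$ enters: otherwise the fibres of $P\times S^1$ match those of the outermost Seifert piece of $E_K$, and $T$ stops being a JSJ torus.

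But disjointness from $\widehat F$ does not place $T$ in the exterior of $K'$. That exterior is $Y\setminus N(\gamma)$, with $\gamma$ the surgery dual, and $\gamma$ meets $\widehat F$ once. Cutting $Y$ along $\widehat F\simeq F$ gives the exterior $E_{L_K}$ of the $(2,2n)$-cable of $K$, and $\gamma$ becomes an arc $\alpha$ joining its two boundary tori. A priori $\alpha$ can cross $T$ and be knotted inside $E_K$ arbitrarily. If $K'$ were hyperbolic this is exactly what would happen, as with any toroidal surgery on a hyperbolic knot. Controlling $\alpha$ is the real content of the theorem, and your outline has no mechanism for it.

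The paper handles it by regluing. A banding $\widetilde K$ of $L_K$ along a band with core $\alpha$ satisfies $S^3_{+1}(\widetilde K)\cong S^3$, with $K'$ corresponding to the meridian $J$ of the band, so $\widetilde K$ is the unknot by Gordon--Luecke. Then a Gabai-style argument in the exterior of $\widetilde K\cup J$ shows that an unknotting oriented banding of $L_K$ lies in a cabling annulus. That argument uses that a norm-minimising surface survives all but one filling of $J$, together with a classification of incompressible annuli in $E_{L_K}$. The conclusion exhibits $K'$ as $W^n_K$ directly, clasp sign and twisting included, with no Heegaard Floer input.

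Even granting a satellite structure, two further steps fail. First, genus one with winding number zero does not force wrapping number two. Any pattern bounding a genus one surface in the solid torus has winding number zero, and the bands of that surface can wind around the solid torus several times or be knotted inside it. For instance, the Whitehead double of the $(2,1)$-cable of $K$ is a genus one satellite of $K$ whose pattern has wrapping number four. Pinning the pattern down is again exactly what the banding theorem does.

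Second, the case of $K$ the unknot cannot be delegated to known results. Before this theorem, zero was known to be characterising only for the trefoils, the figure-eight and $5_2$ among twist knots. In that case there is no companion torus at all, so your strategy has nothing to act on, and the whole problem is the dual-arc problem above. The paper treats it by the same banding argument, using that $F$ is the unique JSJ torus when $|n|\ge 2$, and Property R when $n=0$.
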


\begin{remark}\label{rem: conventions}
In this paper, we follow the convention that the $(p,q)$-cable of a knot $J$ is the one that lies on the boundary of a tubular neighbourhood of $J$ and winds $p$ times around the canonical longitude of $J$ and $q$ times around the meridian of $J$. Moreover, we will always assume that cables are \emph{non-trivial}, that is to say that the $(p,q)$-cable of a knot $J$ is never an unlink or equal to $J$. Analogously, we will assume that a $(p,q)$-torus knot is not the unknot.
\end{remark}

By taking $K$ to be the unknot in Theorem~\ref{thm: main thm gen}, we have the following corollary.

\begin{corollary}
Zero is a characterising slope for all twist knots.
\end{corollary}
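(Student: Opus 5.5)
The corollary follows from Theorem~\ref{thm: main thm gen} with $K$ the unknot $U$. The plan is first to identify twist knots with twisted Whitehead doubles of the unknot, and then to check that the hypotheses of the theorem hold for every $n$.

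For the first step, recall that $W^n_K$ is obtained by placing the Whitehead pattern $P_n$ (the twist-knot pattern in the solid torus, with $n$ full twists in its two parallel strands) into a neighbourhood of $K$ via the Seifert framing. When $K=U$, the standard unknotted solid torus $V\subset S^3$ is used. Thus $W^n_U$ is the image of $P_n$ under the standard embedding $V\hookrightarrow S^3$. Drawing this picture gives a clasp together with $n$ full twists on two parallel strands, which is exactly a twist knot. As $n$ ranges over $\Z$, and after also taking mirrors (the negatively clasped doubles), we obtain every twist knot. The mirror case is harmless: $S^3_0(\overline{J})\cong -S^3_0(J)$, so zero is characterising for $J$ if and only if it is characterising for $\overline{J}$. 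This is the same mirroring remark the paper makes after Theorem~\ref{thm: main theorem}. I would also double-check the sign conventions so that both clasp signs and all twist parameters are covered, and that the unknot and the trefoil/figure-eight degenerate cases are included. These cases are consistent anyway, since they are already known to have characterising slope zero \cite{KMOS07, OSDehnsur19}.

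For the second step, $U$ is neither a $(p,q)$-torus knot nor a $(p,q)$-cable under the conventions of Remark~\ref{rem: conventions}, since these exclude the unknot and trivial cables. So the first clause of Theorem~\ref{thm: main thm gen} applies with no restriction on $n$, and zero is characterising for every $W^n_U$, that is, for every twist knot.

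The only real obstacle is the bookkeeping in the first step. One has to make sure that the paper's definition of the $n$-twisted double, in particular the framing sign convention, matches the standard parametrisation of twist knots, so that no twist knot is missed. This amounts to drawing the pattern in the unknotted solid torus and comparing it with the diagram of a twist knot.
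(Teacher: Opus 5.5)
Your proposal is correct and follows the paper's argument: the corollary comes from taking $K$ to be the unknot in Theorem~\ref{thm: main thm gen}, which Remark~\ref{rem: conventions} excludes from being a torus knot or a cable, so every $n\in\Z$ is allowed, and your mirroring step via $S^3_0(\overline{J})\cong -S^3_0(J)$ is the remark the paper makes after Theorem~\ref{thm: main theorem}. On the bookkeeping you flag, nothing is missed: $W^n_U$ has $2|n|$ crossings in its twist region, so one sign of $n$ gives the twist knot with $2|n|$ half-twists, and the other sign (after reducing the non-alternating diagram) gives, up to mirror image, the twist knot with $2|n|-1$ half-twists.
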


Very few cases of knots for which zero is a characterising slope are known, and whether this was true for the unknot was the content of the celebrated Property R conjecture, proved by Gabai in \cite{Gabaifol3}. More precisely, Gabai showed that $0$-surgery on $K$ detects the genus of $K$ and whether $K$ is fibered or not, and this implies that zero is a characterising slope for the trefoils and the figure-eight knot as well\footnote{To the best of the authors' knowledge, these knots and the knot $5_2$, studied in \cite{BStorusknot}, were the only twist knots for which it was known that zero is a characterising slope. We point out that our result does not provide an independent proof of this fact for the figure-eight and trefoil knots, since our proofs make use of Gabai's theorems.}.

For many years, these have been the only known examples, until Baldwin--Sivek recently  found the first infinite family of knots for which zero is a characterising slope \cite{BSinfinite}, consisting of the genus one knots whose knot Floer homology is two-dimensional in the top Alexander grading. Their proof rests on Heegaard Floer theoretic techniques, and in particular on their classification of this family of knots \cite{BSnonfib}. By contrast, our proof uses classical techniques in $3$-manifold topology.
\newline

In the course of the proof of Theorem \ref{thm: main thm gen}, we obtain a result of independent interest, that we report here. We refer to Section~\ref{defn: banding} for the definition of (oriented) banding of a link; see Definition~\ref{defn: banding}. Loosely speaking, this notion generalises that of band sum to the case of non-split links.  

\begin{thm}\label{thm: band in annulus}
   Let $L_K$ be the $(2,2n)$-cable of a knot $K$, oriented as the boundary of a cabling annulus. Let $\widetilde{K}$ be the knot obtained by an oriented banding of $L_K$ along a band $b$. Then $\widetilde{K}$ is the unknot if and only if $b$ can be isotoped in a cabling annulus for $L_K$.
\end{thm}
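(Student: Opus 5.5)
We prove the ``if'' direction directly. Suppose $b$ lies in a cabling annulus $A$ with $\partial A=L_K$. Since the banding is oriented and $L_K$ is oriented as $\partial A$, a band inside $A$ joining the two boundary components is compatible with the orientations. The result of the banding is then $\partial(A\setminus b)$. Cutting an annulus along a spanning arc gives a disc, so $\widetilde K$ bounds a disc and is the unknot.

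For ``only if'' we assume $\widetilde K$ is the unknot and argue via sutured manifolds and Gabai's theory, in the spirit of the genus-detection results used in the paper. Let $X=S^3\setminus \nu(L_K)$. The band $b$ is a disc $D^2\cong I\times I$ attached to $L_K$ along $I\times\partial I$. Dually, the unknot $\widetilde K$ is obtained by the inverse banding, so $L_K$ is a band sum of $\widetilde K$ with itself, and there is a dual band $b^*$ (the cocore arc). Let $M$ be the exterior of $L_K\cup b$, a genus-two handlebody neighbourhood complement. Its boundary carries a natural suture structure coming from the two sides of the band and the orientation of $L_K$. The key observation is that $M$ is also the exterior of $\widetilde K\cup \tau$, where $\tau$ is the core arc of the dual band. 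Because $\widetilde K$ bounds a disc $\Delta$, the Thurston norm of the class dual to the longitudes in $(M,\gamma)$ is realised by a surface of Euler characteristic $0$, namely the disc with the arc $\tau$ punctured through it, or after isotopy a spanning annulus. On the other side, a minimal-complexity Seifert surface for the two-component link $L_K$ is an annulus; such annuli are unique up to isotopy when $K$ is nontrivial, and unique up to the obvious choices for the $(2,2n)$ torus link when $K$ is trivial, because the cable space is Seifert fibred.

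The plan is then the following.
\begin{enumerate}
\item Isotope $\Delta$ to meet the cocore arc $\tau$ minimally. Then use an innermost-disc argument and the irreducibility of $X$ to show that $\tau$ can be pushed to intersect $\Delta$ in a single point, or else $\Delta$ can be made disjoint from $\tau$. Disjointness would force $L_K$ to be split, which is impossible since $\mathrm{lk}=n\neq0$, or to contain a trivial component, excluded by the nontriviality conventions of Remark~\ref{rem: conventions}.
\item Once $|\Delta\cap\tau|=1$, the surface $\Delta$ minus a neighbourhood of that point, together with the band, reassembles into an annulus $A'$ with $\partial A'=L_K$ containing $b$.
\item Show that $A'$ is isotopic to a cabling annulus. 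This uses the uniqueness of incompressible annuli in the exterior of $L_K$, which consists of the cable space glued to $S^3\setminus\nu K$, via JSJ theory. The spanning annuli with boundary $L_K$ are essentially only the cabling annulus and, when $K$ is trivial, the complementary one.
\end{enumerate}

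The main obstacle is step 1. Reducing $|\Delta\cap\tau|$ to one is not a formal consequence of the disc being a disc, since $\tau$ could be knotted relative to $\Delta$. To get around this, we would instead apply Gabai's sutured manifold hierarchy, or the result that band sums realising minimal genus are ``disc-decomposable''. Concretely, we would show that $(M,\gamma)$ is taut, which follows from the annulus being norm-minimising for $L_K$. The existence of the genus-zero surface $\Delta$ then shows the sutured decomposition along the product disc forces the band to be a product disc in the taut sutured manifold $X\setminus\nu(A)$. A product disc in $A\times I$ is isotopic into $A$, and this gives the conclusion.
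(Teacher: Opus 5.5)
Your ``if'' direction is fine, but the ``only if'' direction has a genuine gap, and step~1 aims at the wrong configuration.

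\textbf{Step 1 has the target reversed.} If $b$ lies in a cabling annulus $A$, then $\Delta=\overline{A\setminus b}$ is a spanning disc for $\widetilde K$. It meets the cocore $\tau$ only in its two endpoints. Conversely, suppose some spanning disc for $\widetilde K$ has interior disjoint from $\tau$. Then $\Delta\cup b$ is an annulus with boundary $L_K$ containing $b$. It is incompressible since $L_K$ is not the unlink, so it is a cabling annulus by Lemma~\ref{lem: every annulus is cabling}.

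So disjointness is exactly the conclusion you want, not a case that forces $L_K$ to be split. (Your appeal to $\mathrm{lk}=n\neq 0$ also fails in the main case $n=0$.) With one transverse point, by contrast, $\Delta\cup b$ is only a ribbon-immersed annulus, and removing the singularity costs genus; step~2 does not go through. The one-versus-zero dichotomy is also not meaningful: spinning an end of $\tau$ once around $\widetilde K$ changes $|\Delta\cap\tau|$ by one. Innermost-disc arguments say nothing about how an arc meets a disc. So the real content of the theorem, producing a spanning disc for $\widetilde K$ whose interior misses the band, is never addressed.

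\textbf{The sutured-manifold fallback does not supply it.} Saying the band becomes a product disc in the complement of a cabling annulus is essentially the statement to be proved. Moreover, that complement is $E_K$ with two sutures of slope $n$, which is not $A\times I$ except for the Hopf link.

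\textbf{The missing idea is the paper's argument.}
\begin{enumerate}
\item Let $J$ be a meridian of $b$, and let $S$ be a minimal genus Seifert surface for $\widetilde K$ in the exterior of $J$, meeting the disc bounded by $J$ in one arc.
\item $g(S)\ge 1$, because $L_K$ is not the unlink.
\item $g(S)\le 1$ by \cite[Corollary~2.4]{gabaifol2}, which allows at most one filling along $J$ in which $S$ stops being genus minimising. The $\infty$-filling is one such filling, since $\widetilde K$ is unknotted. If $g(S)\ge 2$, the $0$-filling would be another: in $S^3_0(J)$ the band slides off a cabling annulus $A$, and $A\cup b$ is a genus one surface in the class of $S$.
\item Cutting $S$ along the arc gives an annulus $A'$ with $\partial A'=L_K$, cabling by Lemma~\ref{lem: every annulus is cabling}, with $S=A'\cup b$.
\item Since $b$ may still be knotted in $E_K$, one more step is needed. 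Apply Lemma~\ref{lem: surfaces in solid torus} to $S$ in the solid torus $E_{\widetilde K}$ to get a spanning disc for $\widetilde K$ with interior disjoint from $S$. Gluing $b$ to it gives the required cabling annulus containing $b$.
\end{enumerate}

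Finally, spanning annuli are not unique up to isotopy when $K$ is a $(p,q)$-torus knot or cable and $n=pq$. What holds, and is all you need, is Lemma~\ref{lem: every annulus is cabling}.
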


We notice that the case when $L_K$ is the unlink follows from \cite{gabai1987genus}. In fact, Gabai shows that the genus of a knot is superadditive under band sum, and it is additive exactly when a minimal genus Seifert surface for the band sum is obtained by band summing two minimal genus Seifert surfaces for the knots. Our Theorem \ref{thm: band in annulus} can be thought as a partial analogous for a family of links that is not split. Indeed, our proof is inspired by the one in \cite{gabai1987genus}.
\newline

Unless otherwise stated, diffeomorphisms between oriented manifolds will be assumed to be orientation preserving. 

\subsection*{Overview of the proof.} The outline of the proof of Theorem \ref{thm: main thm gen} is the following. Without loss of generality, we can assume that $W^n_K$ is a non-trivial knot. Suppose that $K'$  has $0$-surgery diffeomorphic to that of $W^n_K$. Then work of Gabai shows that $K'$ has genus one, and, after a study the JSJ decomposition of \(S^3_0(W^n_K)\), we show that any diffeomorphism between the surgeries must identify a capped minimal genus surface $\widehat{\Sigma}_{K'}$ for $K'$ with that of \(W^n_K\); see Proposition~\ref{prop: Sigma cap to F}. This implies that the manifold obtained by cutting the exterior \(E_{K'}\) of $K'$ along \(\Sigma_{K'}\) can be identified with the exterior of the $(2,2n)$-cable $L_K$ of $K$ after the removal of a properly embedded arc $\alpha$. Moreover, we show that there is a band $b$ with core $\alpha$ such that a banding of $L_K$ along $b$ yields an unknot $\widetilde{K}$ with the property that $K'$ is identified with the meridian of the band $b$ in $S^3_1(\widetilde{K})\cong S^3$. Finally, we use Theorem \ref{thm: band in annulus}, to prove that $K'$ is isotopic to $W^n_K$.

\subsection*{Structure of the paper.} 
In Section \ref{sec: lemmas} we prove some lemmas on the JSJ decomposition of the $0$-surgery on $W^n_K$, as well as Proposition~\ref{prop: Sigma cap to F}.
In Section~\ref{sec: bandings}, we focus on bandings of cable links, and prove Theorem \ref{thm: band in annulus}. Finally, Section~\ref{sec: main thm} is devoted to the  proof of Theorem~\ref{thm: main thm gen}.

\subsection*{Acknowledgements}
The authors started this project while participating to the PCMI Reseach Program ``Knotted Surfaces in Four-Manifolds'', where Conjecture~\ref{conj} was presented by Ivan So during a lecture. They would like to thank Ivan So for his lecture, the organisers of the research program for organising such a great event, and the IAS/Park City Mathematics Institute for providing an excellent research environment.

The authors would also like to thank Cameron Gordon for his interest in this project and for the stimulating conversations.
They thank Alessio Di Prisa for suggesting the reference \cite{Budney2005JSJdecompositionsOK}, and Marco Golla and Marco Marengon for providing valuable feedback during the final stages of this project. In particular, they thank Marco Marengon for the discussion that led to the proof of Lemma~\ref{lem: surfaces in solid torus}. Finally, they also would like to thank Marc Kegel for helpful comments on a final draft of the paper.

\subsection*{Individual grant support}
AM was supported by the grant Excellence 151337, and DS was supported by the ERC Advanced Grant KnotSurf4d.

\subsection*{AI statement}
 The use of AI and LLMs during this project was limited to literature research. AI and LLM tools were not used during the writing process, nor suggested any of the ideas appearing in this paper.

\section{Some preliminary lemmas on JSJ decompositions}\label{sec: lemmas}
In this section we collect some preliminary lemmas on the JSJ decompositions of the manifolds that will appear in the proof of the main theorem, with the primary goal of proving Proposition \ref{prop: Sigma cap to F}, which will be the starting point for the proof of Theorem \ref{thm: main thm gen}.

\begin{figure}[]
    \centering
\includegraphics[width=0.25\textwidth]{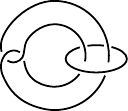}
    \caption{The positive Whitehead link.}
    \label{Whitehead link}
\end{figure}
Recall that a link \(L\subset S^3\) is \emph{fibered} if there exists a Seifert surface $\Sigma$ for $L$, an orientation preserving homeomorphism $h$ of $\Sigma$ fixing $\partial \Sigma$ pointwise, and an orientation preserving homeomorphism
$$
\chi: E_L \rightarrow M_h,
$$
where $E_L$ is the exterior of $L$ and $M_h$ is the mapping torus of $(\Sigma,h)$, so that 
\begin{itemize}
    \item $\chi_{|\Sigma}$ is the inclusion $\Sigma\subset M_h$;
    \item $\chi(m_i)=\{x_i\}\times [0,1]$, where $m_i$ is a meridian for the $i$-th component of $L$ and $x_i$ is a point \(\partial \Sigma\).
\end{itemize}

We denote by $W$ the positive Whitehead link, depicted in Figure~\ref{Whitehead link}, and recall the following fact.

\begin{lem}\label{lem: whitehead is fibered}
$W$ is a fibered link with fiber a twice-punctured torus $\Sigma_W$ and monodromy given by $h=\tau_0\tau_1\tau^{-1}_2$, where $\tau_i$ denotes the right-handed Dehn twist along the curve $\gamma_i$, shown in Figure~\ref{Monodromy of Whitehead}.
\end{lem}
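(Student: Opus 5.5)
We will realise $W$ concretely as a plumbing and read off the monodromy from standard facts about Murasugi sums. The positive Whitehead link bounds an evident Seifert surface $\Sigma_W$, a twice-punctured torus obtained from a disc by attaching three bands. Equivalently, $\Sigma_W$ is a plumbing (Murasugi sum) of three annuli $A_0,A_1,A_2$ whose cores are the curves $\gamma_0,\gamma_1,\gamma_2$ of Figure~\ref{Monodromy of Whitehead}. We check this from the picture: an Euler characteristic count gives $\chi(\Sigma_W)=-2$, and $\partial\Sigma_W$ has two components, so $\Sigma_W$ has genus one. We then choose the three cores so that $\gamma_1$ meets each of $\gamma_0$ and $\gamma_2$ once, and $\gamma_0\cap\gamma_2$ is controlled by the plumbing pattern. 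Each $A_i$ should be a Hopf band: positive for $\gamma_0,\gamma_1$ and negative for $\gamma_2$. The negative band accounts for the clasp of the positive Whitehead link, and the positive ones for the untwisted companion part.

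Next we invoke two classical results. First, a Hopf band is a fiber surface whose monodromy is a single Dehn twist along its core, right-handed for the positive band and left-handed for the negative one. Second, by Gabai's theorem on Murasugi sums, a Murasugi sum of fiber surfaces is again a fiber surface. Its monodromy is the composition of the monodromies of the summands, each extended by the identity; by Stallings, for iterated plumbings the order is the plumbing order. It follows that $W$ is fibered with fiber $\Sigma_W$ and monodromy $\tau_0\tau_1\tau_2^{-1}$, up to conjugation and the ordering convention. This also gives the explicit homeomorphism $\chi\colon E_W\to M_h$ with the required properties: the identification of $\chi|_{\Sigma}$ with the inclusion and of meridians with the arcs $\{x_i\}\times[0,1]$ is built into how the mapping torus of a plumbing is assembled.

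The main obstacle is checking from the diagram that $\Sigma_W$ really decomposes as this plumbing with the correct twisting signs and the correct plumbing order. Sign and order errors would replace $h$ by a conjugate or by its inverse. A conjugate is harmless up to renaming the curves, but the inverse is not, since it corresponds to the mirror image. To pin this down I would compute the Seifert matrix of $\Sigma_W$ in the basis given by the cores and compare with the linking data of $W$. Alternatively I would verify the result by checking that $h_*$ on $H_1(\Sigma_W)$ has the characteristic polynomial predicted by the multivariable Alexander polynomial of $W$. If the plumbing description proves awkward, a fallback is to exhibit $W$ as the closure of a homogeneous braid. Stallings' theorem then gives fiberedness directly, and the monodromy is the product of twists read off from the braid word, positive generators giving right-handed twists and the single negative generator giving $\tau_2^{-1}$.
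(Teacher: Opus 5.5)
The paper does not prove this lemma; it only refers to the discussion at the start of \cite[Section~3.2]{SanWhit}. Your route is a legitimate self-contained proof: realise $\Sigma_W$ as a linear plumbing of three Hopf bands, then combine the Stallings--Gabai theorem on Murasugi sums with the fact that a Hopf band has monodromy a single twist along its core. The geometric input you need is true. $W$ is the two-bridge link $S(8,3)$, and $8/3=2+1/(2+1/(-2))$ is an even continued fraction. More concretely, $W$ is (up to mirroring) the closure of the homogeneous braid $\sigma_1^{-1}\sigma_2\sigma_1^{-1}\sigma_2^{2}$. Its Seifert surface is a chain of three Hopf bands: the middle band and one end band are positive, the other end band is negative. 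The citation buys brevity. Your argument buys an actual derivation, including the normalisation of $\chi$, at the cost of the diagram check you rightly single out as the real work.

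There are three things to fix when you carry it out.

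First, your guessed intersection pattern is not the paper's. There $\gamma_0$ is the middle curve, meeting $\gamma_1$ and $\gamma_2$ once each, with $\gamma_1\cap\gamma_2=\emptyset$. This is forced by the proof of Lemma~\ref{lem: gluing map}: capping one boundary component turns $h$ into the single twist $\tau_0$, so $\gamma_1$ and $\gamma_2$ must become isotopic after capping, which is impossible if they meet once. The discrepancy is harmless. In the paper's labelling your plumbing gives $\tau_1\tau_0\tau_2^{-1}$, and since $\tau_1$ and $\tau_2$ commute, conjugating by $\tau_1$ gives $\tau_0\tau_1\tau_2^{-1}$. More generally, for curves forming a tree every ordering of the twists lies in one conjugacy class, so the plumbing order is irrelevant too.

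Second, the characteristic-polynomial test cannot settle the mirror ambiguity you worry about. Both $h_*$ and $h_*^{-1}$ have characteristic polynomial $(t-1)^3\doteq\Delta_W(t)$. Use instead the Seifert matrix $V$ of $\Sigma_W$ in the basis of cores, for instance the signature of $V+V^{T}$.

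Third, in the braid fallback the twists are not one per letter. In the word above, the two letters $\sigma_1^{-1}$ give one negative Hopf band, and the three letters $\sigma_2$ give a trefoil fiber, i.e.\ two positive twists along curves meeting once.
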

\begin{proof}
See for example the discussion at the beginning of \cite[Section~3.2]{SanWhit}    
\end{proof}
We denote by $S^3_{0,\bullet}(W)$ the manifold obtained by performing $0$-framed Dehn surgery on one of the components of $W$ while drilling the other. Since the components of $W$ can be exchanged with an isotopy, this manifold does not depend on the choice of which component we drill and surger.

Fix once and for all an orientation of $S^1$, and let $P$ denote the pair of pants. Let us fix an arbitrary orientation on $P$ and consider the manifold $P\times S^1$, endowed with the product orientation.  Denote the boundary components of $P$ by $\partial_0, \partial_1, \partial_2$, oriented as the boundary of $P$, and fix peripheral bases for $\partial_0 \times S^1$ and $\partial_1 \times S^1$ given by $(\partial_i, \{p_i\}\times S^1)$, with $p_i\in \partial_i$ for $i=0,1$. 

More generally, if $M_h$ is the mapping torus of an orientation preserving homeomorphism $h$ of an oriented surface $\Sigma$ fixing $\partial \Sigma$ pointwise, and $\partial_i \Sigma$ is a boundary component of $\Sigma$ with the induced orientation, we refer to the peripheral bases given by $(\partial_i \Sigma, \{p_i\}\times S^1)$, where $p_i\in \partial_i \Sigma$, as the \emph{standard} peripheral bases.

\begin{figure}[]
    \centering
\includegraphics[width=0.35\textwidth]{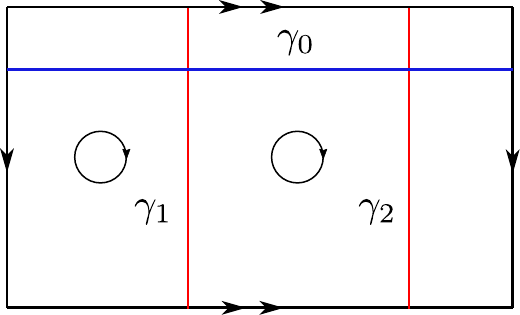}
    \caption{A twice-punctured torus, obtained by identifying the boundary of the square as indicated by the arrows, and the curves $\gamma_i$, for $i=0,1,2$.}
    \label{Monodromy of Whitehead}
\end{figure}
\begin{lem} \label{lem: gluing map}
The manifold $S^3_{0,\bullet}(W)$ is  diffeomorphic to \((P\times S^1)/_Q\), where $Q$ is the diffeomorphism between $\partial_0\times S^1$ and $\partial_1\times S^1$ defined by the matrix $\left(\begin{smallmatrix}
-1 & -1 \\
0 & 1
\end{smallmatrix}\right)$ in the standard peripheral bases.
\end{lem}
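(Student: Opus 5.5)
Starting from Lemma~\ref{lem: whitehead is fibered}, I will write the exterior $E_W$ of the Whitehead link as the mapping torus $M_h$ of $h=\tau_0\tau_1\tau_2^{-1}$ on the twice-punctured torus $\Sigma_W$. Under this identification, the longitude of each component of $W$ is the boundary curve $\partial_i\Sigma_W$, since the linking number is zero. The meridian is the standard curve $\{x_i\}\times S^1$. So $0$-surgery on one component caps that boundary component of every fibre with a disc, and $S^3_{0,\bullet}(W)$ is the mapping torus of the induced monodromy $\hat h$ on the once-punctured torus $T_0=\Sigma_W\cup D^2$. Drilling the other component is automatic, because the other boundary component remains.

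Next I compute $\hat h$. In Figure~\ref{Monodromy of Whitehead}, after capping one puncture, two of the curves $\gamma_i$ become isotopic in $T_0$, and I expect these to be $\gamma_1$ and $\gamma_2$, which cobound the capped puncture. Their twists then cancel, and $\hat h$ is isotopic rel boundary to a single Dehn twist $\tau_c$ along a non-separating simple closed curve $c\subset T_0$. Here "rel boundary" means up to possibly a boundary twist, which I must track carefully because it affects the final gluing matrix. The mapping torus of a single Dehn twist is obtained by cutting $T_0\times S^1$ along a vertical annulus or torus. Concretely, I cut $T_0$ along $c$ to get a pair of pants $P$, whose boundary is $\partial_0,\partial_1$ (the two copies of $c$) and $\partial_2=\partial T_0$. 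The mapping torus of $\mathrm{id}_P$ is $P\times S^1$, and $M_{\tau_c}$ is recovered by gluing $\partial_0\times S^1$ to $\partial_1\times S^1$ with a shear. The shear sends the fibre $\{p_0\}\times S^1$ to $\{p_1\}\times S^1$ plus or minus one copy of $c$.

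Finally I write this gluing in the standard peripheral bases $(\partial_i,\{p_i\}\times S^1)$. The two copies of $c$ inherit opposite orientations as boundary components of $P$, which produces the entry $-1$ in the upper left. The fibre is preserved with coefficient $+1$, and the twist contributes the off-diagonal $-1$. The gluing is orientation reversing on the boundary tori, as it must be for the quotient to be oriented, and the matrix has determinant $-1$, consistent with this.

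The main obstacle is bookkeeping of signs. I must confirm that the surviving twist is right-handed rather than left-handed with the conventions of Figure~\ref{Monodromy of Whitehead}, and that the chosen orientation of $P$ gives exactly $\left(\begin{smallmatrix}-1&-1\\0&1\end{smallmatrix}\right)$ rather than a variant with $+1$. If the sign comes out wrong, I would fix it by reversing the orientation of $S^1$ or swapping the roles of $\partial_0$ and $\partial_1$, since swapping replaces $Q$ by $Q^{-1}$, which flips the off-diagonal sign. I would also double-check that any boundary-parallel twist left over from the capping does not change the diffeomorphism type. It does not, since it only reparametrises the drilled boundary torus, and that torus is not part of the gluing $Q$.
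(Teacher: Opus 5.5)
Your outline matches the paper's proof. After capping, $\tau_1\tau_2^{-1}$ becomes trivial and the monodromy is the right-handed twist $\tau_0$; this handedness is already part of Lemma~\ref{lem: whitehead is fibered}, so there is nothing left to confirm there. Cutting along the torus swept out by $\gamma_0$ then exhibits $S^3_{0,\bullet}(W)$ as $P\times S^1$ reglued by a map with matrix $\left(\begin{smallmatrix}-1&b\\0&1\end{smallmatrix}\right)$, $b=\pm1$. Your reasons for the diagonal entries and for $\det=-1$ are fine. Your final remark that a twist about $\partial T_0$ only reframes $\partial_2\times S^1$ is correct; your earlier claim that it affects $Q$ is not.

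The gap is that $b$, the only entry carrying information, is asserted rather than computed, and your fallbacks cannot repair a wrong sign. Every matrix $\left(\begin{smallmatrix}-1&b\\0&1\end{smallmatrix}\right)$ squares to the identity, so $Q^{-1}=Q$ and swapping $\partial_0$ and $\partial_1$ changes nothing. Reversing the orientation of $S^1$ (or of $P$) with the product orientation and standard bases redefined consistently also changes nothing. A reflection of that factor is an orientation-preserving diffeomorphism between the two oriented versions of $P\times S^1$ carrying standard bases to standard bases, so the statement is independent of these choices. The sign flips only if you use $\mathrm{id}_P$ times a reflection of $S^1$ as an orientation-reversing map, and then you have proved the lemma for the mirror link $\overline W$. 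Diffeomorphisms are orientation preserving throughout the paper, and this sign is exactly what later yields $+1$- rather than $-1$-surgery on $\widetilde K$ in Proposition~\ref{prop: Ktilde is unknot} (a positive rather than negative clasp). So it must actually be derived.

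The missing step is the explicit untwisting, which is how the paper determines $b$:
\begin{enumerate}
\item Isotope $\tau_0$ so that it is supported in a collar $A$ of $\partial_0$ in $P$, with $\gamma_0$ boundary parallel. The cut-open manifold is then $M=P\times[0,1]/(\tau_0(x),0)\sim(x,1)$, and in its standard peripheral bases the regluing map is $\left(\begin{smallmatrix}-1&0\\0&1\end{smallmatrix}\right)$.
\item Take an isotopy from $\tau_0^{-1}$ to $\mathrm{id}_A$ relative to the boundary circle of $A$ lying in the interior of $P$. It gives a level-preserving diffeomorphism $g\colon P\times S^1\to M$ supported in $A\times S^1$.
\item Track the fibre $\{p_0\}\times S^1$ through this isotopy, using that $\tau_0$ is right-handed. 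This shows that $g$ acts on $\partial_0\times S^1$ by $\left(\begin{smallmatrix}1&1\\0&1\end{smallmatrix}\right)$ and as the identity on the other boundary tori.
\item Composing gives $Q=\left(\begin{smallmatrix}-1&0\\0&1\end{smallmatrix}\right)\left(\begin{smallmatrix}1&1\\0&1\end{smallmatrix}\right)=\left(\begin{smallmatrix}-1&-1\\0&1\end{smallmatrix}\right)$.
\end{enumerate}
Until you carry out this computation, your argument proves the lemma only up to the sign of the off-diagonal entry.
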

\begin{proof}
Since the components of the Whitehead link have linking number zero, the longitudes defined by a Seifert surface for $W$ coincide with the canonical longitudes of the link components. This implies that the  manifold $S^3_{0,\bullet}(W)$, fibers over the circle, with fiber $\Sigma$ obtained by capping one of the boundary components of the fiber surface $\Sigma_W$ with a disc, and monodromy $h'$ obtained by extending the monodromy of $W$. By the explicit description given in Lemma~\ref{lem: whitehead is fibered}, we deduce that the monodromy $h'$ is a single right-handed Dehn twist along a non-separating curve $\gamma_0\subset \Sigma$. By cutting $S^3_{0,\bullet}(W)$ along the torus spanned by $\gamma_0$ via the monodromy, we obtain a manifold $M$ diffeomorphic to $P\times S^1$. More precisely, if we denote with $\partial_0$ and $\partial_1$ the boundary components of $P$ coming from the boundary of a tubular neighbourhood of $\gamma_0$ in $\Sigma$ and think of $\gamma_0$ as a boundary parallel curve in $P$, then 
\[
M=\frac{P\times [0,1]}{(\tau_0(x),0)\sim (x,1)}.
\]
By construction, in the standard peripheral bases of $\partial_0 P\times S^1$ and $\partial_1 P\times S^1$ of $M$, the homological action of the gluing map $f$ needed to recover $S^3_{0,\bullet}(W)$ is described by $\left(\begin{smallmatrix}
-1 & 0 \\
0 & 1
\end{smallmatrix}\right).$
The manifold $P\times S^1$ is diffeomorphic to $M$ via a diffeomorphism supported in a collar of $\partial_0\times S^1$. More precisely, let $A=S^1\times [0,1]$ be a collar neighbourhood of $\partial_0\subset P$, with $S^1\times \{0\}=\partial_0$, containing the curve $\gamma_0$. Since $\tau_0^{-1}$ is isotopic to the identity of $A$ relative to $S^1\times \{1\}$, we can find a level preserving diffeomorphism $H$ of $A\times [0,1] $
so that
\begin{itemize}
\item $H$ restricts to the identity on $A\times \{0\}\cup S^1\times \{1\}\times [0,1]$;
\item $H$ is $\tau_0^{-1}$ on $A\times \{1\}$;
\item $H$ restricts to a left-handed Dehn twist on $S^1\times \{1\}\times [0,1]$.
\end{itemize}
By plugging the above diffeomorphism in a collar neighbourhood of $\partial_0\times S^1$, we obtain a diffeomorphism $g$ from $P\times S^1$ to $M$ which in the standard peripheral bases is given by the matrix  $\left(\begin{smallmatrix}
1 & 1 \\
0 & 1
\end{smallmatrix}\right)$ when restricted on $\partial_0\times S^1$, and the identity on the other boundary components. In conclusion, the manifold $S^3_{0,\bullet}(W)$ is obtained by identifying $P\times S^1$ with $M$ via $g$ and then gluing the boundary components via the map $f$, and this proves the desired result.
\end{proof}

We recall that, for $n\in \Z$, we denote by $W^n_{K}$ the $n$-twisted positive Whitehead double of $K$, and we refer the reader to the introduction for our conventions on torus knots and cable links. 

\begin{namedassumption}\label{assum 1}
Unless otherwise stated, for the rest of this section we will assume that $K\subset S^3$ is either a non-trivial knot or that $K$ is the unknot and $n\neq -1, 0, 1$.
Moreover, if $K$ is the $(p,q)$-torus knot, or the $(p,q)$-cable of a non-trivial knot, we further assume  $pq \ne n$. Finally, we suppose that  $K'$ is a knot such that $S^3_0(K') \cong S^3_0(W_K^n)$. 
\end{namedassumption}

We observe that part of the assumptions are made to rule out the case of $W^n_K$ being a trefoil knot, the unknot or the figure-eight knot. In fact, this happens exactly when $K$ is the unknot and $n=1, 0, -1$ respectively. 
\newline

We denote by $F$ the properly embedded torus given by the image of $\partial_0 P\times S^1$ in $S^3_{0,\bullet}(W)$, and for any non-split link $L\subset S^3$, we denote by $E_L$ the exterior of $L$.

We recall here the JSJ decomposition theorem by Jaco--Shalen and Johannson, and a result on essential annuli and Seifert fibrations.

\begin{thm}[JSJ decomposition theorem \cite{JacoShalen, Johannson}]
For every compact orientable and irreducible $3$-manifold $M$ there exists a collection of finitely many disjoint incompressible tori 
such that each component formed by cutting along these tori is either atoroidal or a Seifert manifold. Moreover, a minimal such collection is unique up to isotopy. 
\end{thm}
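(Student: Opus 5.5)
Since this is the classical theorem of Jaco--Shalen and Johannson, which the paper simply cites, I would follow the now-standard short argument (due essentially to Hatcher, and to Neumann--Swarup) rather than the characteristic-submanifold machinery. I would prove existence and uniqueness separately.

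\emph{Existence.} Fix a triangulation of $M$ and use Kneser--Haken finiteness. Via normal surface theory, there is a constant $c(M)$ such that any collection of disjoint, pairwise non-parallel, incompressible tori in $M$ has at most $c(M)$ elements. Two consequences follow.
\begin{itemize}
\item A maximal such collection exists. Cutting $M$ along it produces pieces that are atoroidal: an essential torus in a piece would be incompressible in $M$ by irreducibility and incompressibility of the cutting tori, and would not be parallel to any torus already in the collection, contradicting maximality.
\item Hence the family of collections whose complementary pieces are each atoroidal or Seifert fibred is non-empty. Among these, I pick one, $\mathcal{T}$, of minimal cardinality.
\end{itemize}

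\emph{Uniqueness.} Let $\mathcal{T}'$ be a second minimal collection, and isotope it to meet $\mathcal{T}$ transversely in a minimal number of curves. By irreducibility and incompressibility, every intersection curve is essential in both tori. I then argue piece by piece on $\mathcal{T}'\cap N$ for each piece $N$ of $M\setminus\mathcal{T}$.
\begin{itemize}
\item The components of $\mathcal{T}'\cap N$ are essential annuli or tori.
\item If $N$ is atoroidal and contains an essential annulus, the annulus theorem shows $N$ is Seifert fibred. So in every case where intersections occur, $N$ is Seifert fibred.
\item In a Seifert piece, an essential annulus or torus is isotopic to a vertical one, except in the small exceptional pieces ($T^2\times I$ and the twisted $I$-bundle over the Klein bottle), which I treat by hand.
\end{itemize}
Now suppose some torus $T\in\mathcal{T}$ meets $\mathcal{T}'$. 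Then the vertical annuli on either side of $T$ force the Seifert fibrations of the two adjacent pieces to match along $T$. The union of those two pieces is therefore Seifert fibred, so $T$ could be deleted from $\mathcal{T}$, contradicting minimality.

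It follows that $\mathcal{T}$ and $\mathcal{T}'$ can be made disjoint. Each torus $T'\in\mathcal{T}'$ then lies in a single piece $N$ of $M\setminus\mathcal{T}$. If $N$ is atoroidal, $T'$ is boundary parallel there, hence isotopic to a torus of $\mathcal{T}$. If $N$ is Seifert, $T'$ is vertical. If it were not boundary parallel, it would cut $N$ into Seifert pieces whose fibrations agree along $T'$, so $T'$ could be removed from $\mathcal{T}'$, again contradicting minimality. Thus every torus of $\mathcal{T}'$ is parallel to one of $\mathcal{T}$, and symmetrically every torus of $\mathcal{T}$ is parallel to one of $\mathcal{T}'$. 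This gives uniqueness up to isotopy.

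The main obstacle is this uniqueness step. Two points need real care: the classification of essential annuli and tori in Seifert fibred spaces, including the exceptional small pieces and the non-uniqueness of Seifert fibrations there; and the claim that matching fibrations across a torus lets it be dropped. Here I would first try to reduce everything to the statement that vertical annuli on both sides of a torus determine the fibre slope on it.
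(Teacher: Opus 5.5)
The paper gives no proof of this statement; it simply quotes Jaco--Shalen and Johannson. So there is nothing to match your Hatcher-style outline against. Your existence half and your step making $\mathcal{T}'$ disjoint from $\mathcal{T}$ are sound, modulo the $T^2\times I$ and twisted $I$-bundle pieces you defer.

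\textbf{The gap is in the very last step.} You argue that if $T'\in\mathcal{T}'$ is vertical and not boundary parallel in a Seifert piece $N$ of $M\setminus\mathcal{T}$, then it cuts $N$ into Seifert pieces whose fibrations agree along $T'$, \emph{so} $T'$ could be removed from $\mathcal{T}'$. But minimality of $\mathcal{T}'$ is a statement about the two pieces of $M\setminus\mathcal{T}'$ adjacent to $T'$, not about the pieces of $N\setminus\mathcal{T}'$. A priori the former are larger. They continue across every torus of $\mathcal{T}$ that is not isotopic to a torus of $\mathcal{T}'$, and may contain further pieces of $M\setminus\mathcal{T}$. Knowing that $N$ is Seifert fibred tells you nothing yet about whether those larger pieces are Seifert fibred, let alone with fibres matching along $T'$. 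So no contradiction with minimality has been reached.

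\textbf{How to close it.} You have to use the symmetric information as well.
\begin{enumerate}
\item Your own argument, with $\mathcal{T}$ and $\mathcal{T}'$ exchanged, shows that every torus of $\mathcal{T}$ not isotopic to a torus of $\mathcal{T}'$ can be isotoped to be vertical in a Seifert piece of $M\setminus\mathcal{T}'$.
\item Let $Q$ be the piece of $M\setminus(\mathcal{T}\cup\mathcal{T}')$ on one side of $T'$. It lies in $N$ and inherits a fibration from $N$, since it is cut out of $N$ by vertical tori.
\item The piece $P$ of $M\setminus\mathcal{T}'$ containing $Q$ either equals $Q$, with the fibration from $N$, or is Seifert fibred with the $\mathcal{T}$-tori inside it vertical. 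In the second case $Q$ also inherits, after an isotopy of $P$ rel $\partial P$, a fibration from $P$.
\item In that second case $\partial Q$ contains $T'$ and a torus of $\mathcal{T}$ not isotopic to $T'$. Hence $Q$ is not a solid torus, not $T^2\times I$, and not the twisted $I$-bundle over the Klein bottle, so its Seifert fibration is unique up to isotopy. Therefore $P$ and $N$ induce the same fibre slope on $T'$.
\item Doing this on both sides of $T'$ shows that the two pieces of $M\setminus\mathcal{T}'$ meeting along $T'$ are Seifert fibred with matching fibres. That is the contradiction with minimality you wanted.
\end{enumerate}

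A smaller omission: in the atoroidal case, $T'$ is a priori parallel to a torus of $\mathcal{T}$ \emph{or to a component of $\partial M$}. Minimality of $\mathcal{T}'$ excludes the latter, but you need to say so.
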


We will refer to such a minimal collection of tori as the \emph{JSJ tori} of $M$, and to the components of their complement as the \emph{JSJ pieces} of $M$. The JSJ piece of the exterior of a knot $J$ containing $\partial E_J$ will be called the \emph{outermost piece}.

\begin{thm}\label{thm: wald and Hatc}
    Let $M$ be a Seifert fibered manifold with non-empty boundary, and
    suppose that $M$ is not homeomorphic to a solid torus, to $S^1 \times S^1 \times I  $ or to the twisted $I$-bundle over the Klein bottle. Then the Seifert fibration on $M$ is unique up to isotopy, and every essential annulus in $M$ can be isotoped to be vertical.
\end{thm}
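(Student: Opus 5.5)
This statement is classical rather than new, so the natural plan is to cite it and indicate how it follows from the standard theory; I would not reprove the JSJ machinery itself. There are two assertions: uniqueness of the Seifert fibration up to isotopy, and verticality of essential annuli.

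For uniqueness, I would appeal to the classification of Seifert fibrations due to Waldhausen. A compact orientable Seifert manifold with non-empty boundary that is not one of the listed exceptions (the solid torus, $T^2\times I$, and the twisted $I$-bundle over the Klein bottle) has a unique Seifert fibration up to isotopy. The key point is the structure of the fundamental group. When the base orbifold has negative Euler characteristic, the class of a regular fiber generates the unique maximal infinite cyclic normal subgroup of $\pi_1(M)$. Hence any two fibrations have isotopic regular fibers, and Waldhausen's rigidity for Haken manifolds then upgrades this to an isotopy of the fibrations. Outside the three exceptions, the base of a Seifert manifold with boundary always has negative orbifold Euler characteristic, apart from a few cases with Euler characteristic zero or positive. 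Those cases, namely the disc with at most one cone point, the annulus, and the Möbius band or the disc with two cone points of order $2$, produce exactly the solid torus, $T^2\times I$, and the twisted $I$-bundle over the Klein bottle. This is where the hypothesis excluding them is used.

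For the annulus statement, let $A\subset M$ be an essential annulus and take a fixed Seifert fibration. By the standard argument (Waldhausen; see also Hatcher's notes on $3$-manifolds, Lemma~1.16), one first isotopes $A$ so that it meets a collection of vertical tori and annuli, which cut $M$ into fibered solid tori, in essential arcs or curves. One then runs an innermost-disc and outermost-arc argument in each solid torus piece, using incompressibility and $\partial$-incompressibility of $A$. This shows that each piece of $A$ is either vertical or horizontal. A horizontal essential annulus would force $M$ to be an annulus or Möbius-band bundle, that is, $T^2\times I$ or the twisted $I$-bundle over the Klein bottle, both of which are excluded. Therefore $A$ is isotopic to a vertical annulus.

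The main obstacle is the bookkeeping in the horizontal-versus-vertical dichotomy. In the paper itself I would simply cite Waldhausen and Hatcher for both assertions rather than reproduce these arguments.
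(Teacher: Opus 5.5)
Your proposal is correct and follows the same route as the paper, which also proves the statement purely by citation: Waldhausen's Satz~10.1 (see also Jaco, Theorem~VI.18) for uniqueness of the Seifert fibration up to isotopy, and Hatcher's Lemma~1.16 for isotoping essential annuli to be vertical. Your sketches are accurate, with one small caveat: the $\pi_1$ argument by itself only shows that the regular fibers of two fibrations are homotopic, and it is Waldhausen's theory that promotes this to an isotopy.
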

\begin{proof}
The statement on the uniqueness of the Seifert fibration on $M$ up to isotopy is a direct consequence of \cite[Satz 10.1]{WALDHAUSEN1967}; see also \cite[Theorem VI.18]{jaco1980lectures}.
As a consequence of \cite[Lemma 1.16]{hatcher2007notes}, all essential annuli can be isotoped to be vertical with respect to this Seifert fibration.
\end{proof}

\begin{lem} \label{lem: JSJ}
Let $K\subset S^3$ be a non-trivial knot satisfying Assumption \ref{assum 1}, and let $\mathcal{T}=\{T_1,\dots, T_k\}$ be the JSJ tori of the JSJ decomposition of $E_K$. Then the manifold $S^3_0(W_K^n)$ has a JSJ decomposition given by $\mathcal{T}\cup \{\partial E_K, F\}$.
\end{lem}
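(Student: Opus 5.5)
We will exhibit $S^3_0(W^n_K)$ as a union of $E_K$ and $S^3_{0,\bullet}(W)$ along $\partial E_K$, and then apply the standard criterion for a collection of tori to be the JSJ collection. Every piece must be Seifert fibered or atoroidal, every torus must be incompressible, and no two adjacent Seifert pieces may have fibrations that match along their common torus (and no Seifert piece may be glued to itself with matching fibres).

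Since $W^n_K$ is the satellite with pattern one component of $W$ (suitably twisted), the exterior of $W^n_K$ is obtained by gluing $E_K$ to the exterior of the Whitehead link, with the meridian of the companion component sent to the canonical longitude of $K$ shifted by $n$ meridians. Doing $0$-surgery on the pattern component turns the Whitehead link side into $S^3_{0,\bullet}(W)$; the slope is still $0$ because the linking number is zero. Thus
\[
S^3_0(W^n_K)\cong E_K\cup_{\partial E_K} S^3_{0,\bullet}(W).
\]
By Lemma~\ref{lem: gluing map}, $S^3_{0,\bullet}(W)$ cut along $F$ is $P\times S^1$. Cutting along $\mathcal{T}\cup\{\partial E_K,F\}$ therefore gives the JSJ pieces of $E_K$ together with one copy of $P\times S^1$, and every piece is Seifert fibered or atoroidal.

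For incompressibility, the tori in $\mathcal{T}$ and $\partial E_K$ are incompressible in $E_K$ because $K$ is nontrivial. The torus $F$ is incompressible in $P\times S^1$, and $\partial E_K$ is incompressible in it as well, since $P\times S^1$ has incompressible boundary. A standard innermost-disc argument then gives incompressibility of every torus in the glued manifold. In particular $F$ is nonseparating and stays incompressible, because cutting along it yields $P\times S^1$ with incompressible boundary.

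Minimality is the main obstacle. There are two checks.

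\begin{enumerate}[(a)]
\item $F$ must not be removable. The piece $P\times S^1$ is glued to itself along $\partial_0\times S^1\to\partial_1\times S^1$ by $Q=\left(\begin{smallmatrix}-1&-1\\0&1\end{smallmatrix}\right)$. This $Q$ sends the fibre $(0,1)$ to $(-1,1)$, which is not $\pm$ the fibre, so the fibrations do not match. Since $P\times S^1$ is not one of the exceptional spaces of Theorem~\ref{thm: wald and Hatc}, its fibration is unique up to isotopy. Hence $F$ is a genuine JSJ torus.

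\item $\partial E_K$ must not be removable. This can only fail if the outermost piece $V$ of $E_K$ is Seifert fibered and its fibre on $\partial E_K$ matches the $S^1$ fibre of $P\times S^1$.
\begin{itemize}
\item Under the identification above, the $S^1$ fibre on $\partial_2\times S^1=\partial E_K$ corresponds to the meridian of the companion component of $W$, hence to the framing curve $\lambda_K+n\mu_K$ of $K$. (This should be checked against the matrix conventions of Lemma~\ref{lem: gluing map}.)
\item If $V$ is Seifert fibered, it is either a torus knot exterior, a cable space, or a composing space. For a composing space the boundary fibre is the meridian $\mu_K$, which never equals $\lambda_K+n\mu_K$.
\item For a $(p,q)$-torus knot or a $(p,q)$-cable, the fibre on the boundary is the cabling slope $pq\,\mu_K+\lambda_K$. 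This matches exactly when $n=pq$, which Assumption~\ref{assum 1} excludes.
\end{itemize}
Uniqueness of the fibrations again uses Theorem~\ref{thm: wald and Hatc}, after noting that $V$ is not a solid torus, $T^2\times I$, or the twisted $I$-bundle over the Klein bottle. The last of these does not embed in $S^3$.
\end{enumerate}

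Finally, the tori in $\mathcal{T}$ remain necessary because the pieces of $E_K$ adjacent to them are unchanged. Together with uniqueness of the minimal collection, this identifies $\mathcal{T}\cup\{\partial E_K,F\}$ as the JSJ tori of $S^3_0(W^n_K)$.
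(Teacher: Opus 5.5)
Your proposal follows essentially the same route as the paper. You prove incompressibility by innermost-disc arguments, where the paper instead handles $F$ via Gabai's irreducibility and $[F]\neq 0$ in $H_2$. You prove minimality by checking that $Q$ does not preserve the $S^1$-fibre, and that the fibre on $\partial E_K$ is the $n$-framed longitude, which is neither the meridian nor the slope $pq$.

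You should add that $S^3_0(W^n_K)$ is irreducible, since the JSJ theorem requires it. The paper takes this from Gabai, but it also follows from your setup by the same innermost argument, because all pieces are irreducible and are glued along incompressible tori. Your remark that no piece is $T^2\times I$ or the twisted $I$-bundle over the Klein bottle does the work of the paper's explicit exclusion of torus (semi-)bundles.
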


\begin{proof}
We first prove that all tori in $\mathcal{T}\cup \partial E_K\cup F$ are incompressible in $S^3_0(W_K^n)$, starting with $F$. First recall that \cite[Theorem~3.1]{Gabaifol3} and the fact that $W_K^n$ is a non-trivial knot imply that $S^3_0(W_K^n)$ is irreducible. In particular, since $F$ is non-trivial in the second homology group of $S^3_0(W_K^n)$, it is incompressible, otherwise it would be homologous to a sphere, and hence separating. 

By construction, cutting $S^3_0(W_K^n)$ along $\partial E_K$ yields $E_K\cup S^3_{0,\bullet}(W)$. The torus $\partial E_K$ is incompressible in $E_K$ since $K$ is non-trivial, and incompressible in $S^3_{0,\bullet}(W)$, since a compressing disc, via an innermost argument, can be made disjoint from $F$, and hence would imply the existence of a compressing disc inside $P\times S^1$, which is not possible. This implies that $\partial E_K$ is incompressible in $S^3_0(W_K^n)$.

Regarding the tori in $\mathcal{T}$, if there exists a compressing disc in $S^3_0(W_K^n)$, then by an innermost argument again, we can suppose that it is disjoint from $\partial E_K$. However, this would imply compressibility in $E_K$, which contradicts our hypotheses.

 By construction, cutting $S^3_0(W_K^n)$ along the tori in this collection produces a union of Seifert fibered manifolds and atoroidal manifolds. In order to prove that the collection $\mathcal{T}\cup \{\partial E_K, F\}$ defines a JSJ decomposition, we need to show that this set is minimal.
First of all, notice that $S^3_0(W_K^n)$ is not a torus semi-bundle since its first homology group does not contain any torsion (we refer to \cite{Martelli2016AnIT} for a definition of torus semi-bundles). Moreover, by \cite[Corollary~8.23]{Gabaifol3}, the manifold $S^3_0(W_K^n)$ is a torus bundle if and only if $W_K^n$ is, up to mirror, the trefoil or the figure-eight knot. Since $K$ is a non-trivial knot by hypothesis, we conclude that $S^3_0(W_K^n)$ is not a torus bundle.

Therefore the set of incompressible tori $\mathcal{T} \cup \{\partial E_K , F\}$ is not minimal if and only if, when we cut $S^3_0(W_K^n)$ along them, two adjacent pieces are Seifert fibered and their fibers match up under the gluing (see for example the proof of \cite[Proposition~11.5.4]{Martelli2016AnIT}).
 As a consequence, the tori in $\mathcal{T}$ must be JSJ tori for $S^3_0(W_K^n)$. By Theorem \ref{thm: wald and Hatc},
 the manifold $P \times S^1$ has a unique Seifert fibration up to isotopy, and its fibers are $\{x\} \times S^1$, $x \in P$. Since the fibers are not preserved by the gluing map in Lemma \ref{lem: gluing map}, it follows that $S^3_{0,\bullet}(W)$ is not Seifert fibered. Therefore, $F$ is a JSJ torus for $S^3_0(W_K^n).$

 We are left with checking that $\partial E_K$ is a JSJ torus.
Of course, if the outermost piece $N$ of the JSJ decomposition of $E_K$ is not Seifert fibered then $\partial E_K$ is a JSJ torus.
We therefore just need to check the case where $N$ is Seifert fibered. In that case, by \cite{Budney2005JSJdecompositionsOK} (see also \cite[Theorem 2.6]{Wak}), there are three possibilities:
\begin{itemize}
\item $K$ is the $(p,q)$-torus knot;
\item $K$ is the $(p,q)$-cable of a non-trivial knot $J$;
\item $K$ is a composite knot.
\end{itemize}
In the first and second case, since $K$ is non-trivial, $N$ has a unique Seifert fibration and its regular fibers have slope $pq \neq 0$ on $\partial E_K$. 
Lastly, if $K$ is composite, the regular fibers of the outermost piece have meridional slope on $\partial E_K$. 

Recall that the boundary component of $P\times S^1$ identified to $\partial E_K$ corresponds to the boundary of a tubular neighbourhood of one of the two components of the Whitehead link, and by the definition of fibered link given in Section \ref{sec: lemmas}, the regular fibers $\{pt\}\times S^1$ corresponds to meridians of this component. Since we are considering the $n$-twisted Whitehead double of $K$, the gluing map identifies these meridians to the $n$-longitudes of $K$, and since $K$ satisfies Assumption \ref{assum 1}, they are not identified to the regular fibers of $N$. This proves that $\partial E_K$ is a JSJ torus of $S^3_0(W_K^n)$.
\end{proof}

\begin{lem} \label{lem: JSJ banale}
Let $n\neq -1, 0, 1$ and $K\subset S^3$ be the unknot. Then $F$ is the unique JSJ torus of $S^3_0(W_K^n)$, and the unique JSJ piece is Seifert fibered and atoroidal.
\end{lem}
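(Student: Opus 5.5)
The plan is to identify the complement of $F$ explicitly as a Seifert fibered space with a unique fibration, and then argue exactly as in the proof of Lemma~\ref{lem: JSJ}. When $K$ is the unknot, $E_K$ is a solid torus whose meridian disc is bounded by the canonical longitude $\lambda_K$. So $S^3_0(W^n_K)$ is obtained from $S^3_{0,\bullet}(W)\cong (P\times S^1)/_Q$ (Lemma~\ref{lem: gluing map}) by Dehn filling the boundary component $\partial_2 P\times S^1$.

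As observed in the proof of Lemma~\ref{lem: JSJ}, the regular fibre $\{pt\}\times S^1$ on this boundary component is a meridian of the drilled Whitehead component. The $n$-twisted gluing identifies it with $\lambda_K+n\mu_K$. This curve meets the filling slope $\lambda_K$ in $|n|$ points. Hence the filling is a Seifert filling, and cutting $S^3_0(W^n_K)$ along $F$ gives a manifold $M_n$. It is Seifert fibered over an annulus with a single exceptional fibre of order $|n|\ge 2$ (this is where $n\ne -1,0,1$ is used), and its fibres on the two boundary tori are the fibres $\{p_i\}\times S^1$ of $P\times S^1$.

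Next I will show that $M_n$ is atoroidal and has a unique Seifert fibration. Since $|n|\geq 2$, $M_n$ is not a solid torus, $T^2\times I$, or the twisted $I$-bundle over the Klein bottle; the base orbifold has one cone point and two boundary components. Theorem~\ref{thm: wald and Hatc} then gives uniqueness of the fibration up to isotopy. For atoroidality, an incompressible torus in a Seifert manifold with non-empty boundary can be isotoped to be vertical, so it projects to an essential simple closed curve in the base orbifold. In an annulus with one cone point every simple closed curve is either inessential, boundary parallel, or encircles the cone point together with one boundary component, and in that case it is parallel to the other boundary component. Hence every incompressible torus in $M_n$ is boundary parallel.

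Then I will show $F$ is a JSJ torus and the collection $\{F\}$ is minimal. By \cite[Theorem~3.1]{Gabaifol3} $S^3_0(W^n_K)$ is irreducible, and $F$ is non-trivial in $H_2$, so $F$ is incompressible exactly as in Lemma~\ref{lem: JSJ}. For minimality, I need that $S^3_0(W^n_K)$ itself is not a Seifert manifold in which $F$ is absorbed. The manifold is not a torus semi-bundle, since $H_1\cong\Z$ is torsion-free. It is not a torus bundle by \cite[Corollary~8.23]{Gabaifol3}, since $W^n_K$ is neither a trefoil nor the figure-eight knot when $n\ne\pm1$. So, as in the proof of Lemma~\ref{lem: JSJ}, $F$ could be removed only if the fibres of $M_n$ on the two sides of $F$ matched under the gluing $Q$. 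By uniqueness of the fibration on $M_n$, those fibres are $\{p_0\}\times S^1$ and $\{p_1\}\times S^1$. The matrix $\left(\begin{smallmatrix}-1&-1\\0&1\end{smallmatrix}\right)$ sends $(0,1)$ to $(-1,1)$, so $Q$ does not preserve fibres. Hence $\{F\}$ is the JSJ collection, and the unique piece $M_n$ is Seifert fibered and atoroidal.

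The step needing the most care is the identification of the filling slope relative to the fibre, namely that their intersection number is exactly $|n|$. This is what guarantees that $M_n$ is genuinely of the stated type, with one exceptional fibre of order $|n|$, rather than a product or a reducible manifold. I will check it directly from the conventions in the proof of Lemma~\ref{lem: JSJ}.
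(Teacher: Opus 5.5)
Your proposal is correct and follows essentially the same route as the paper. The paper also identifies $S^3_0(W^n_K)$ cut along $F$ as a Seifert space over the annulus with one exceptional fibre of order $|n|$, by extending the fibration over the solid torus $E_K$, whose fibre slope is $n$. It then rules out $S^1\times S^1\times I$ so as to invoke uniqueness of the fibration via Theorem~\ref{thm: wald and Hatc}, excludes torus (semi-)bundles, and reruns the fibre-matching argument of Lemma~\ref{lem: JSJ}. Your base-orbifold argument for atoroidality and your check that $Q$ sends $(0,1)$ to $(-1,1)$ simply spell out steps the paper leaves implicit.
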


\begin{proof}
 First notice that also in this case $S^3_0(W^n_K)$ is irreducible and $F$ is incompressible. 
 Cutting $S^3_0(W^n_K)$ along $F \cup \partial E_K$ yields the disjoint union of $E_K$ and $P \times S^1$, and the fibers of the Seifert fibration of $P\times S^1$ are identified with simple closed curves of slope $n$ in $\partial E_K$.

 Since $E_K$ is a solid torus, it admits a Seifert fibration with fibers of slope $ {n}$ on $\partial E_K$. This shows that $S^3_0(W^n_K)$ cut along $F$ is a Seifert fibered space over the annulus with one exceptional fiber of order $|n|$, and in particular is atoroidal.

 If this manifold is not $S^1 \times S^1 \times I$, this Seifert fibration is unique up to isotopy (see Theorem \ref{thm: wald and Hatc}). 
 The manifold $S^1\times S^1 \times I$ admits only one Seifert fibration up to diffeomorphism \cite[Theorem 2.3]{hatcher2007notes}, and has no exceptional fibers. Hence, if $n\neq \pm 1$, $S^3_0(W^n_K)$ cut along $F$ is not $S^1 \times S^1 \times I$.
 Moreover, since $n \neq \pm 1$, the manifold $S^3_0(W^n_K)$ is not a torus (semi-)bundle (see the proof of Lemma \ref{lem: JSJ}), and therefore the same argument as in the proof of Lemma \ref{lem: JSJ} allows us to show that $F$ is a JSJ torus.
\end{proof}

\begin{lem} \label{lem: F unique}
    If $S$ is an incompressible, non-separating torus in $S^3_0(W_K^n)$, then $S$ is isotopic to $F$.
\end{lem}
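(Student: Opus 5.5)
The plan is to put $S$ in normal position with respect to the JSJ tori of $M=S^3_0(W^n_K)$, which are given by Lemma~\ref{lem: JSJ}, or by Lemma~\ref{lem: JSJ banale} when $K$ is the unknot. Then rule out every configuration other than $S$ parallel to $F$, using slope mismatches under the gluing maps.

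\textbf{Normal position.} Since $M$ is irreducible (\cite[Theorem~3.1]{Gabaifol3}) and $S$ is incompressible, we isotope $S$ to meet the union $\mathcal{J}$ of the JSJ tori transversely and minimally. A standard innermost disc argument then shows that $S\cap\mathcal{J}$ consists of curves essential in both $S$ and $\mathcal{J}$. Consequently, either $S$ lies inside a single JSJ piece, or $S$ is cut into annuli, each properly embedded in a piece and essential there. Boundary-parallel annuli are removed by minimality.

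\textbf{$S$ disjoint from $\mathcal{J}$.} Suppose first that $S$ misses $\partial E_K$ (when $K$ is non-trivial). If $S\subset E_K$, then $S$ sits in $S^3$, hence separates $E_K$. Since $E_K$ has a single boundary torus, $S$ also separates $M$, a contradiction. Hence $S\subset S^3_{0,\bullet}(W)$.

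If moreover $S\cap F=\emptyset$, then $S$ is an incompressible torus in $P\times S^1$. Every such torus is vertical, hence boundary parallel, since $P$ contains no essential simple closed curves other than boundary-parallel ones. Parallelism to the $\partial E_K$ boundary component would make $S$ separating. So $S$ is parallel to $\partial_0\times S^1$ or $\partial_1\times S^1$, both of which are isotopic to $F$ in $M$.

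\textbf{$S$ meeting $\mathcal{J}$.} Now suppose $S\cap\mathcal{J}\neq\emptyset$. Each annulus of $S$ lying in $P\times S^1$ is essential, so by Theorem~\ref{thm: wald and Hatc} it is isotopic to a vertical annulus. Its boundary curves therefore have fiber slope $\{pt\}\times S^1$ on each boundary torus.

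\begin{enumerate}[(i)]
\item Consider an annulus of $S$ in $P\times S^1$ with a boundary curve on $\partial_0\times S^1$. That curve is glued by $Q$ to a curve on $\partial_1\times S^1$ of slope $Q(0,1)^T=(-1,1)^T$, which is not the fiber slope. Yet the adjacent annulus of $S$ across $F$ is also vertical in $P\times S^1$, so its boundary on $\partial_1\times S^1$ must have fiber slope. This is a contradiction, so $S\cap F=\emptyset$.
\item Consider an annulus meeting $\partial E_K$. On the $P\times S^1$ side its boundary has fiber slope, which corresponds to the $n$-longitude of $K$ by the proof of Lemma~\ref{lem: JSJ}. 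On the $E_K$ side, the adjacent annulus is an essential annulus in the outermost piece $N$.
\begin{itemize}
\item If $N$ is hyperbolic, $N$ has no essential annuli.
\item If $N$ is Seifert fibered, the annulus is vertical, so its slope is $pq$ (torus knot or cable) or meridional (composite).
\end{itemize}
By Assumption~\ref{assum 1}, none of these equals the $n$-longitude, so this case is impossible too.
\end{enumerate}
Hence $S$ misses $\mathcal{J}$, and the previous paragraph applies.

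\textbf{Unknot case.} When $K$ is the unknot, the single piece $M\setminus F$ is Seifert fibered with unique fibration (Lemma~\ref{lem: JSJ banale}), and its fibers on the two copies of $F$ are the $P\times S^1$ fibers. The same mismatch under $Q$ excludes $S\cap F\neq\emptyset$. An incompressible torus in this piece, a Seifert space over an annulus with one exceptional fiber, is vertical. Such a torus is boundary parallel, or bounds a neighbourhood of the exceptional fiber, which is compressible; so it is parallel to $F$.

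\textbf{Main obstacle.} I expect the hardest step to be the normal-position reduction together with the slope bookkeeping. In particular, one must justify that the adjacent annuli across a torus are simultaneously essential, so that Theorem~\ref{thm: wald and Hatc} applies on both sides. One must also handle an annulus in $P\times S^1$ with both boundaries on the same boundary torus. For the latter I would use that vertical annuli in $P\times S^1$ connecting a boundary component to itself are boundary parallel, which would contradict minimality.
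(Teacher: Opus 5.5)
Your argument is correct and is essentially the paper's proof. You put $S$ in normal position with respect to the JSJ tori and force the essential annuli to be vertical, so that fiber slopes would have to agree across every JSJ torus met by $S$. You check the mismatch directly via $Q$ and Assumption~\ref{assum 1}, whereas the paper phrases it as a contradiction with minimality of the JSJ collection, which rests on the same slope checks in Lemma~\ref{lem: JSJ}. Then the separation argument places $S$ in $P\times S^1$ parallel to $F$.

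One correction to your closing remark. A vertical annulus in $P\times S^1$ with both boundary curves on the same torus need not be boundary parallel: take $a\times S^1$ with $a$ an arc from $\partial_0$ to itself separating $\partial_1$ from $\partial_2$. You do not need that claim, however, since your slope-mismatch argument only looks at one boundary curve of each annulus at a time.
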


\begin{proof}
We start by showing that $S$ can be assumed to be disjoint from the JSJ tori of $S^3_0(W^n_K)$. The argument follows essentially the proof of uniqueness of the JSJ decomposition.

Recall that by Lemma \ref{lem: JSJ} and Lemma \ref{lem: JSJ banale} we know the JSJ decomposition of $S^3_0(W^n_K)$.
After performing an isotopy, we can suppose that $S$ is transverse to all the JSJ tori 
and that the intersection of $S$ with the JSJ tori has a minimal number of connected components. We assume that this number is positive, since otherwise $S$ is already disjoint from the JSJ tori.
Since the JSJ pieces are irreducible and the JSJ tori are incompressible, we can suppose that $S$ is cut by the JSJ tori into annuli, and that each of these annuli is essential in its JSJ piece. The atoroidal pieces that contain essential annuli are Seifert fibered (see \cite[Lemma 1.18]{hatcher2007notes}), and hence $S$ must intersect only Seifert fibered JSJ pieces. As a consequence of Theorem \ref{thm: wald and Hatc} and our hypotheses, the Seifert fibered JSJ pieces of $S^3_0(W_K^n)$ all admit a unique Seifert fibration up to isotopy, and essential annuli can be isotoped to be vertical in their Seifert fibered pieces. This means that if $T$ is a JSJ torus with $T \cap S \neq \emptyset$, then the Seifert fibrations on the Seifert pieces containing $T$ match along $T$. But this would lead to a contradiction, since in that case $T$ could be removed from the JSJ tori.

Therefore we can suppose that $S$ is disjoint from the JSJ tori, and hence contained in one of the JSJ pieces. 
If $K$ is the unknot, by Assumption \ref{assum 1} we know that $n \neq 0,1,-1$, and that by Lemma \ref{lem: JSJ banale} the unique JSJ piece is Seifert fibered and atoroidal. Since $S$ is incompressible in $S^3_0(W^n_K)$, we deduce that it is isotopic to $F$. 

We will therefore suppose that $K$ is non-trivial.
Since every torus in $S^3$ is separating and $S$ is not, $S$ cannot be contained in $E_K\subset S^3$, meaning that $S$ must be contained in the JSJ piece $P \times S^1$. Up to isotopy, the torus $S$ is vertical in $P \times S^1 $ (see \cite[Proposition 1.12]{hatcher2007notes}), and hence isotopic to one of its boundary components. In $S^3_0 (W_K^n)$ the boundary components of $P \times S^1$ get sent to $\partial E_K$ and $F$. Again, since $S$ is not separating, it cannot be isotopic to $\partial E_K$, and so it must be isotopic to $F$.
\end{proof}

Although not strictly necessary for our purposes, the previous proposition holds also when $W^n_K$ is the trefoil or the figure-eight knot. For completeness we provide a proof.

\begin{lem}\label{lem: fig8 and trefoil}
Assume that $K$ is the unknot and that $n=\pm 1$. If $S$ is an incompressible, non-separating torus in $S^3_0(W^n_K)$, then $S$ is isotopic to $F$.
\end{lem}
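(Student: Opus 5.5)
When $K$ is the unknot and $n=\pm1$, $W^n_K$ is (up to mirror) the trefoil or the figure-eight knot. Then $S^3_0(W^n_K)$ is a torus bundle over $S^1$ with fiber $\widehat\Sigma$, a capped genus one Seifert surface, and monodromy $\phi\in SL_2(\Z)$. This $\phi$ is the elliptic order-six matrix $\left(\begin{smallmatrix}1&1\\-1&0\end{smallmatrix}\right)$ for the trefoil and the Anosov matrix $\left(\begin{smallmatrix}2&1\\1&1\end{smallmatrix}\right)$ for the figure-eight. The JSJ-based argument of Lemma~\ref{lem: F unique} breaks down here, so I would argue directly in the torus bundle. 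The plan has two steps: show that any incompressible non-separating torus $S$ is isotopic to the fiber $\widehat\Sigma$, and then show that $F$ itself is isotopic to $\widehat\Sigma$.

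\emph{Step 1.} We have $H_1(S^3_0(W^n_K))\cong\Z$, since it is zero-surgery on a knot. Moreover $H_2\cong\Z$ is generated by $[\widehat\Sigma]$. Since $S$ is non-separating, $[S]=k[\widehat\Sigma]$ with $k\neq 0$.

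I would put $S$ in normal position with respect to the fibration: isotope it to be transverse to $\widehat\Sigma$ with the minimal number of intersection curves. Using incompressibility of $S$ and $\widehat\Sigma$ and irreducibility of the manifold (Gabai), every intersection curve is essential in both surfaces. Cutting along $\widehat\Sigma$ gives $T^2\times I$. The pieces of $S$ there are incompressible annuli or tori, and by minimality the annuli are not boundary-parallel. The standard classification of incompressible surfaces in $T^2\times I$ then says each annulus is vertical, of the form $c\times I$ for a curve $c$ of fixed slope. For the annuli to glue up across $\widehat\Sigma$, that slope must be preserved by $\phi$ up to sign. This is where the argument uses the specific monodromies, and I expect it to be the main obstacle, mainly in keeping the normal-position argument clean.

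For the trefoil, $\phi$ has no real eigenvectors, so it fixes no slope. For the figure-eight, $\phi$ is Anosov, so its eigenvalues are irrational and again no rational slope is fixed. Hence $S\cap\widehat\Sigma=\emptyset$. Then $S$ lies in $T^2\times I$, where every incompressible torus is parallel to $T^2\times\{0\}$, so $S$ is isotopic to $\widehat\Sigma$ (and $k=\pm1$). Equivalently, one can quote the known classification of incompressible tori in torus bundles with non-parabolic monodromy.

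\emph{Step 2.} $F$ is incompressible and non-separating: incompressibility follows as in the proof of Lemma~\ref{lem: JSJ}, since $F$ is non-trivial in $H_2$ and the manifold is irreducible. Step 1 then applies to $F$ as well, so $F$ and $S$ are both isotopic to $\widehat\Sigma$, hence to each other. Alternatively, one can see this directly from Lemma~\ref{lem: gluing map}: $F$ is the torus swept by $\gamma_0$ under the monodromy, and in the unknot case the solid torus $E_K$ is glued to $P\times S^1$ so that $F$ becomes a fiber of the resulting torus bundle.
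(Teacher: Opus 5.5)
Your proof is correct, but it takes a different route from the paper.

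The paper's argument is homological. It notes that $F$ is a fiber of the torus-bundle structure on $S^3_0(W^n_K)$. A non-separating torus $S$ has a dual closed curve meeting it once, so $[S]$ is primitive in $H_2\cong\Z$ and hence $S$ is homologous to $\pm F$. The paper then cites the classical fact (Eisenbud--Neumann) that an incompressible surface homologous to a fiber of a fibration over $S^1$ is isotopic to that fiber.

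You instead prove the needed uniqueness by hand. You put $S$ in normal position with respect to $\widehat\Sigma$, reduce to vertical annuli in $T^2\times I$, and observe that the trefoil and figure-eight monodromies ($|\mathrm{tr}\,\phi|\neq 2$) preserve no rational slope.

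\textbf{What each route buys.} The paper's argument is two lines and independent of the specific monodromy; it needs only $H_2\cong\Z$ plus the general fiber-uniqueness theorem. Yours is self-contained and elementary, and it proves slightly more: every incompressible torus, separating or not, is isotopic to the fiber. So the non-separating hypothesis, and your computation $k\neq 0$, are never actually used. The price is that your argument depends on the absence of $\phi$-invariant slopes. Your Step~2 also justifies the fact, merely asserted in the paper, that $F$ is (isotopic to) a fiber.
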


\begin{proof}
In this case, $W^n_K$ is the trefoil or the figure-eight knot, and its $0$-surgery fibers over the circle, with $F$ as a fiber. Every non-separating torus in $S^3_0(W^n_K)$ has a dual curve, and therefore it is a generator of
the second homology group. In particular, it must be homologous, and hence isotopic to $F$ (see, e.g. \cite[Chapter 1.4]{EisNeu}). 
\end{proof}

Given a knot $J\subset S^3$, with Seifert surface $\Sigma_J$, we denote by $\widehat{\Sigma}_J$ the closed surface in $S^3_0(J)$ obtained by capping $\Sigma_J$ with a meridional disc of the solid torus. 

We conclude the section with the following proposition, which will be the starting point for the proof of the main result of the paper.

\begin{prop} \label{prop: Sigma cap to F}
Let $K$ and $K'$ be knots in $S^3$ satisfying Assumption \ref{assum 1}. Then the $3$-genus of $K'$ is one, and if $\Sigma$ is a Seifert surface of genus one for $K'$, the diffeomorphism $S^3_0(K') \cong S^3_0(W_K^n)$ sends the capped Seifert surface $ \widehat{\Sigma}$ to a surface isotopic to $F$. The same holds if $W_K^n$ is the trefoil or the figure-eight knot.
\end{prop}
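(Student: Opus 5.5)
The plan is to combine Gabai's theorem on genus detection by zero-surgery with the uniqueness of the non-separating incompressible torus from Lemma~\ref{lem: F unique} and Lemma~\ref{lem: fig8 and trefoil}.

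First, the genus. The knot $W^n_K$ is non-trivial under Assumption~\ref{assum 1}, and also when it is the trefoil or the figure-eight knot. It has a genus one Seifert surface, obtained by plumbing the twisted annulus with the band following $K$, so its genus is exactly one. By \cite{Gabaifol3}, $0$-surgery detects the Seifert genus: the genus of a knot $J$ equals the Thurston norm of a generator of $H_2(S^3_0(J))$, taken with the appropriate correction (the norm is $\max(2g-2,0)$, and the capped minimal genus Seifert surface is norm-minimising). Applying this to the diffeomorphism $S^3_0(K')\cong S^3_0(W^n_K)$ shows that $K'$ is non-trivial. Indeed, if $K'$ were the unknot, the surgery would be $S^2\times S^1$, which is reducible, whereas $S^3_0(W^n_K)$ is irreducible by \cite[Theorem~3.1]{Gabaifol3}. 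Hence $g(K')=1$, since genus one and genus zero are distinguished by the surgery.

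Next, let $\Sigma$ be a genus one Seifert surface for $K'$. Then $\widehat{\Sigma}$ is a torus in $S^3_0(K')$ representing a generator of $H_2\cong\Z$, so it is non-separating; a meridian of the surgery solid torus meets it once. It is also incompressible. If it compressed, we would obtain a non-separating sphere, hence a non-trivial class in $H_2$ represented by a sphere, contradicting irreducibility of $S^3_0(K')\cong S^3_0(W^n_K)$. Alternatively this follows directly from Gabai's result that $\widehat{\Sigma}$ is Thurston norm minimising, and norm zero forbids a sphere. Transporting $\widehat{\Sigma}$ through the diffeomorphism gives an incompressible, non-separating torus in $S^3_0(W^n_K)$. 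Lemma~\ref{lem: F unique} then shows that its image is isotopic to $F$ under Assumption~\ref{assum 1}, and Lemma~\ref{lem: fig8 and trefoil} gives the same conclusion in the trefoil and figure-eight cases.

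The step needing most care is the first, namely invoking Gabai correctly in the form "$S^3_0(J)$ determines $g(J)$", including ruling out $K'$ unknotted. Once that is settled, the rest is a direct application of the torus uniqueness lemmas. The only other subtle point is checking incompressibility of $\widehat{\Sigma}$, which I would derive from irreducibility together with the fact that a compression of a non-separating torus yields a non-separating sphere.
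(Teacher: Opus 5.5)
Your proposal is correct and follows essentially the paper's route: Gabai's theorem gives $g(K')=1$ and incompressibility of $\widehat{\Sigma}$, and then Lemma~\ref{lem: F unique} (or Lemma~\ref{lem: fig8 and trefoil}) identifies its image with $F$. The paper quotes the genus-minimising form of Gabai's result, which separates genus zero from genus one directly, so your irreducibility detour is not needed. There are two harmless slips, neither of which affects the argument:
\begin{enumerate}
\item It is the core of the surgery solid torus (equivalently, a meridian of $K'$) that meets $\widehat{\Sigma}$ once, not a meridian of the solid torus.
\item Your ``alternative'' Thurston-norm argument for incompressibility does not work, because a sphere also has norm zero. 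Rely on your irreducibility argument (or on genus minimality) there instead.
\end{enumerate}
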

\begin{proof}
Let $\Sigma$ be a minimal genus Seifert surface for $K'$.
As consequence of \cite[Theorem~3.1]{Gabaifol3}, the surface $\widehat{\Sigma} $ has minimal genus among the surfaces representing a generator of $H_2(S^3_0(K');\Z) \cong \Z$. In particular, $\widehat{\Sigma}$ is incompressible in $S^3_0(K')$, and if two knots have diffeomorphic $0$-surgeries, then they have the same $3$-genus. Since the $n$-twisted Whitehead double of a knot has genus one whenever $n \neq 0$ or $K$ is non-trivial, we deduce that $K'$ has genus one. This implies that $\widehat{\Sigma}$ is an incompressible torus in $S^3_{0}(K')$, and by Lemma \ref{lem: F unique}, or Lemma \ref{lem: fig8 and trefoil} in the case of the trefoil or the figure-eight knot,  the diffeomorphism $S^3_0(K') \cong S^3_0(W_K^n)$ maps it to a surface isotopic to $F$.
\end{proof}

\section{Bandings of $(2,2n)$-cable links}\label{sec: bandings}
In this section, we focus on links $L_K$ that are $(2,2n)$-cables of a knot $K\subset S^3$, with the goal of proving Theorem \ref{thm: band in annulus}, which characterises when the result of a \emph{banding} of $L_K$ is the trivial knot. 

We recall the definition of a banding of a link $L$ (see for example \cite{Munoz}). 

\begin{defn}\label{defn: banding}
Let $L$ be a link in $S^3$, and let $b$ be an embedding of $I\times I$ into $S^3$ with $b(I\times I)\cap L=b(I\times \partial I)$. The link $L'=\left(L\setminus b(I\times \partial I)\right)\cup b(\partial I\times I)$ is called a \emph{banding} of $L$. If $L$ is oriented, an \emph{oriented banding} of $L$ is a banding that respects the orientations of the components of $L$. 
\end{defn}

\begin{remark}
    Note that if the link $L$ is a split link with two components, and the band intersects both components, then a banding of $L$ corresponds to a band sum of the components of $L$.
\end{remark}

In what follows, if $L_K$ is the $(2,2n)$-cable of a knot $K$, we will call a \emph{cabling annulus} an annulus $A\subset E_{L_K}$ such that $\partial A= L_K$ and such that removing a tubular neighbourhood of $A$ from $E_{L_K}$ yields $E_K$.
We start by classifying incompressible annuli in $E_{L_K}$. Recall from Remark \ref{rem: conventions} that under our conventions $L_K$ is never the unlink.

\begin{lem} \label{lem: unique annulus}
Let $L_K$ be the $(2,2n)$-cable of a knot $K$, and assume that $L_K$ is not the Hopf link. If $K$ is the $(p,q)$-torus knot or the $(p,q)$-cable of a non-trivial knot $J$, further assume that $n\ne pq$. Then, there exists a unique incompressible annulus $A$ in $E_{L_K}$, and it is a cabling annulus. If $L_K$ is the Hopf link, then there exist exactly two incompressible annuli $A_1$ and $A_2$ in $E_{L_K}$ with the property that $\partial A_i = L_{K}$, and they are cabling annuli.
\end{lem}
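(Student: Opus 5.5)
We read "incompressible annulus" as an incompressible, boundary-incompressible annulus with $\partial A = L_K$, which is the kind relevant for cabling annuli. The strategy is to describe $E_{L_K}$ via its JSJ decomposition and then use Theorem~\ref{thm: wald and Hatc} to force any such annulus to be vertical in a Seifert fibered piece.

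Write $V$ for a tubular neighbourhood of $K$. Then
\[
E_{L_K} = E_K \cup_{\partial E_K} C_n,
\]
where $C_n$ is the exterior in $V$ of two parallel $(2,2n)$-cable curves. Equivalently, $C_n \cong P\times S^1$, Seifert fibered over the pair of pants. Its regular fibres are the cable curves, of slope $n$ on $\partial V$, and it has no exceptional fibres.

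When $K$ is non-trivial, $\partial E_K$ is incompressible on both sides. We check that $\partial E_K$ is a JSJ torus of $E_{L_K}$ exactly as in the proof of Lemma~\ref{lem: JSJ}. If the outermost piece of $E_K$ is Seifert fibered, then by \cite{Budney2005JSJdecompositionsOK} its fibre slope on $\partial E_K$ is either meridional (composite case) or $pq$ (torus or cable case). The fibre slope of $C_n$ is $n$, which differs from both by the hypothesis $n\ne pq$. So the fibrations do not match, and the JSJ pieces of $E_{L_K}$ are $C_n$ together with the JSJ pieces of $E_K$.

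Now let $A$ be an incompressible annulus in $E_{L_K}$ with $\partial A\subset \partial E_{L_K}$. Isotope $A$ to meet the JSJ tori minimally. Incompressibility of $A$ and of the tori, together with irreducibility, lets us remove trivial intersection curves, so each remaining curve is essential in both surfaces.

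If $A$ meets some torus $T$, then $A\cap T$ consists of core curves of $A$. Hence $A$ is cut into subannuli, each essential in its piece.
\begin{itemize}
\item By \cite[Lemma 1.18]{hatcher2007notes}, each piece containing such a subannulus is Seifert fibered.
\item By Theorem~\ref{thm: wald and Hatc}, each subannulus can be made vertical. The exceptional cases (solid torus, $T^2\times I$, twisted $I$-bundle over the Klein bottle) do not occur as pieces here.
\item The fibrations on the two sides of $T$ then agree along $A\cap T$, which contradicts $T$ being a JSJ torus.
\end{itemize}
Thus $A$ lies in a single piece. Since $\partial A\subset \partial E_{L_K}$, that piece must be $C_n$, and there $A$ is vertical. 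A vertical annulus in $P\times S^1$ is $\alpha\times S^1$ for an essential arc $\alpha\subset P$. If both ends of $\partial A$ lie on the same boundary component, then $\partial A\ne L_K$, and such an annulus is boundary-parallel or compressible (it separates off $\partial_i\times S^1$). If $\alpha$ joins the two boundary components corresponding to $L_K$, then $\alpha$ is unique up to isotopy in $P$ relative to the boundaries. Note that Dehn twists about $\partial_0$ change $\alpha$, but they are realised by isotopy of $A$ in $P\times S^1$, since $\alpha\times S^1$ stays vertical. This gives uniqueness, and $\alpha\times S^1$ is exactly the cabling annulus.

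Two cases remain.
\begin{itemize}
\item $K$ the unknot and $L_K$ not the Hopf link ($|n|\ne 1$, $n\ne 0$). Then $E_{L_K}$ is a single Seifert piece over the annulus with one exceptional fibre of order $|n|$. The fibration is unique by Theorem~\ref{thm: wald and Hatc}, so $A$ is vertical, over an arc joining the two boundary circles and avoiding the cone point. There is a unique such arc up to isotopy (there are two sides of the cone point, but these differ by a twist isotopic through the cone point's neighbourhood). I expect this case to require care, and I would verify it by explicit analysis of arcs in the orbifold annulus.
\item $L_K$ the Hopf link. Then $E_{L_K}=T^2\times I$, where uniqueness of the fibration fails. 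Here I would classify essential annuli directly: they are $\gamma\times I$ for a simple closed curve $\gamma$ with slope matching the boundary slope, which must be the cable slope on both components. Requiring $\partial A=L_K$ fixes $\gamma$ up to two choices, namely the two cabling annuli coming from viewing the Hopf link as a cable of either component's core.
\end{itemize}

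The main obstacle is the careful matching-fibration argument that rules out annuli crossing $\partial E_K$, and pinning down uniqueness of the vertical annulus in the unknot case.
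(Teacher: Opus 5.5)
Your proposal is correct and follows the paper's proof. You place $A$ in the outermost piece $P\times S^1$ (resp.\ in the Seifert space over the annulus with one cone point, resp.\ in $T^2\times I$), make it vertical via Theorem~\ref{thm: wald and Hatc} (resp.\ \cite[Lemma 1.16]{hatcher2007notes}), and conclude from uniqueness of the base arc (resp.\ the two slopes $\pm 1$); your JSJ-crossing argument makes explicit why $A$ lies in $P\times S^1$, which the paper only asserts.

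There are two harmless slips. First, a vertical annulus over an essential arc with both ends on the same boundary circle of $P$ is in fact essential, but it is excluded anyway because its boundary is not $L_K$. Second, the two Hopf-link annuli are distinguished by boundary slope $+1$ versus $-1$, not by which component plays the role of the core, since exchanging the components gives the same annulus.
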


\begin{proof}
If $K$ is non-trivial, arguing as in the proof of Lemma~\ref{lem: JSJ}, one shows that the JSJ decomposition of $E_{L_K}$ has as outermost piece $P \times S^1$. The annulus $A$ must be vertical in $P \times S^1$ and therefore it coincides with the unique cabling annulus $c \times S^1$, where $c$ is an arc in $P$ connecting the two boundary components $\partial_0$ and $\partial_1$. Clearly, cutting $E_{L_K}$ along $A$ yields $E_K$.  Suppose now that $K$ is trivial and $n \neq \pm 1$. The manifold $E_{L_K}$ admits a Seifert fibering over the annulus with one exceptional fiber of order $|n|$ and $A$ must be vertical by Theorem \ref{thm: wald and Hatc}. Since all vertical annuli in $E_{L_K}$ are isotopic, we conclude.

We now consider the case of $K$ trivial and $n=\pm 1$, i.e. when $L_K$ is the Hopf link. In this case, $E_{L_K}\cong T\times I$, and by \cite[Lemma 1.16]{hatcher2007notes}, every essential annulus can be made vertical, up to changing the Seifert fibration. Since every Seifert fibration of $T\times I$ is given by crossing a fibration in circles of $T$ with $I$, this implies that there are at most two non-isotopic essential annuli in $E_{L_K}$ with the required boundary: those with slope $+1$ and $-1$ respectively. Since they have different slopes on the boundary, they are not isotopic.
\end{proof}

\begin{lem} \label{lem: every annulus is cabling}
    Let $L_K$ be the $(2,2n)$-cable of a knot $K$. Let $A$ be an incompressible annulus in $E_{L_K}$ with $\partial A=L_K$. Then $A$ is a cabling annulus for $L_K$. 
\end{lem}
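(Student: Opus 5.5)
The plan is to reduce to Lemma~\ref{lem: unique annulus} in all cases except the one it excludes. That excluded case is $K$ a $(p,q)$-torus knot or a $(p,q)$-cable of a non-trivial knot $J$ with $n=pq$.

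\emph{Cases covered by Lemma~\ref{lem: unique annulus}.} If $L_K$ is not the Hopf link and $n\neq pq$ in the relevant cases, that lemma gives a unique incompressible annulus in $E_{L_K}$, and it is a cabling annulus. So $A$ is a cabling annulus. If $L_K$ is the Hopf link, any incompressible $A$ with $\partial A=L_K$ is one of $A_1,A_2$, both of which are cabling annuli. One point needs checking: the lemma speaks of incompressible annuli up to isotopy. Isotopy in $E_{L_K}$ preserves the property that the complement is $E_K$, so being a cabling annulus is preserved.

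\emph{Exceptional case $n=pq$.} Let $K$ be a $(p,q)$-cable of $J$ (or a torus knot, with $J$ the unknot). Then $E_{L_K}=E_J\cup_{\partial} N$, where $N$ is the exterior in a solid torus $V$ of the link formed by the core-parallel $(p,q)$-cable $K$ together with its two $(2,2pq)$-parallels. Since $n=pq$ is the fibre slope, the two components of $L_K$ are themselves regular fibres of the Seifert fibration of $V\setminus K$. Hence $N$ is Seifert fibred over a disc with two punctures from $L_K$, one boundary coming from $\partial V$, and one exceptional fibre (the core of $V$) of order $p$. The torus $\partial E_J$ is incompressible when $J$ is non-trivial, and the fibration of $N$ does not extend over $E_J$ unless $J$ is itself Seifert, in which case we absorb. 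So the outermost JSJ piece is this Seifert manifold $M$ with fibres parallel to $L_K$.

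\emph{Applying this to $A$.} By a standard innermost/outermost argument, $A$ can be isotoped off the inner JSJ tori, because $\partial A\subset \partial E_{L_K}$ and the tori are incompressible. Then $A$ lies in $M$ as an essential annulus. By Theorem~\ref{thm: wald and Hatc}, $M$ is not one of the exceptional manifolds, so $A$ is vertical: $A=c\times S^1$, where $c$ is an arc in the base orbifold joining the two punctures corresponding to the components of $L_K$ and avoiding the cone point.

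In the base, a disc with three boundary circles (the two $L_K$ punctures and $\partial V$) and one cone point, such arcs are classified by how they separate the cone point and the outer boundary. For every such arc, I would check that cutting $E_{L_K}$ along $c\times S^1$ yields the complement of a single knot isotopic to $K$. The reason is that the regular neighbourhood of $c$ together with the two punctures is a disc containing them, and the corresponding fibred solid torus is a neighbourhood of a regular fibre, which is a $(p,q)$-cable of $J$. Since all regular fibres are isotopic to $K$, $A$ is a cabling annulus. (The same holds when $J$ is trivial, where the two exceptional fibres behave symmetrically.)

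The main obstacle is the last step: verifying that every vertical annulus, including those whose arc winds around the cone point, cobounds with $L_K$ a region whose complement is exactly $E_K$, and not some other knot exterior. I would first try to show that the fibred neighbourhood of (arc $\cup$ two punctures) is always a regular-fibre neighbourhood, which is automatic since the arc avoids the cone point.
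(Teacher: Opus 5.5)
Your proposal is correct and follows the paper's route: reduce to Lemma~\ref{lem: unique annulus}, then for $n=pq$ use that the outermost piece of $E_{L_K}$ is Seifert fibred with fibres parallel to $L_K$, so $A$ is vertical by Theorem~\ref{thm: wald and Hatc}. Your check that the fibred solid torus over a neighbourhood of the arc and the two punctures contains no exceptional fibre, so that its core is a regular fibre isotopic to $K$, is exactly the verification the paper leaves implicit.

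One justification needs fixing: pushing $A$ off the inner JSJ torus is not a pure innermost-disc argument, because circles essential in both $A$ and the torus survive it. Instead use the fibre-matching argument of Lemma~\ref{lem: F unique}: the subannulus of $A$ adjacent to $\partial E_{L_K}$ is vertical in $M$, so an essential subannulus on the other side would force the two Seifert fibrations to agree across a JSJ torus, which is impossible.
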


\begin{proof}
By Lemma \ref{lem: unique annulus} we only need to consider the case of $K$ being the $(p,q)$-torus knot or the $(p,q)$-cable of a non-trivial knot $J$ and $n=pq$. In the first case, $L_K$ is the $(2p,2q)$ torus link and $E_{L_K}$ is a Seifert fibered space over the annulus with two exceptional fibers of order $|p|$ and $|q|$. In the second case, the outermost piece of the JSJ decomposition of $E_{L_K}$ is Seifert fibered with one exceptional fiber of order $|q|$. In both cases there are many isotopy classes of annuli intersecting both components of $\partial E_{L_K}$. However, by using Theorem \ref{thm: wald and Hatc} as in the previous proof, one sees that  these annuli must all be vertical, and that cutting $E_{L_K}$ along any of them always yields $E_K$. 
\end{proof}

Before proving Theorem \ref{thm: band in annulus} we need the following lemma.

\begin{lem}\label{lem: surfaces in solid torus}
Let $S$ be a genus one surface properly embedded in $S^1\times D^2$ such that $\partial S$ is equal to a meridian. Then $\partial S$ bounds a disc in $S^1\times D^2$ intersecting $S$ only in its boundary.    
\end{lem}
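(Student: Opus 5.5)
Let $\mu=\partial S$, a meridian of $V=S^1\times D^2$. The aim is to compress $S$ until it becomes a disc, and then read off the required disc from that sequence of compressions.

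\textbf{Step 1: $S$ is compressible.} The curve $\mu$ is nullhomotopic in $V$. An incompressible surface with boundary, other than a disc, is $\pi_1$-injective. So if $S$ were incompressible, $\partial S$ would bound a disc in $S$, contradicting $g(S)=1$. Since $V$ is irreducible and $H_1(V)$ is torsion free, any properly embedded orientable surface in $V$ with boundary $\mu$ is separating in the appropriate sense. We may therefore compress $S$ along a disc $D_1$ with interior disjoint from $S$.

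\textbf{Step 2: analyse the compression.} If the boundary $\partial D_1$ were separating in $S$, it would cut off a once-punctured torus. This is impossible for a compressing disc, because it would force the torus to lie in a ball and $D_1$ would not be essential. After re-choosing $D_1$ we may assume $\partial D_1$ is non-separating in $S$. Compressing along $D_1$ then yields a planar surface $S_1$ with one boundary curve, that is, a disc $D'$ with $\partial D'=\mu$. Moreover, $S$ is recovered from $D'$ by attaching a tube, the boundary of a neighbourhood of an arc $\gamma$ that is dual to the compression and has both endpoints on $D'$.

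\textbf{Step 3: produce the disc.} Cut $V$ along $D'$. Since $D'$ is a meridian disc (its boundary is a meridian and $V$ is irreducible), $V\setminus D'$ is a ball $B$ with two copies $D'_\pm$ of $D'$ on its boundary. The arc $\gamma$ lies in $B$ with endpoints on $D'_\pm$.
\begin{itemize}
\item If both endpoints lie on the same copy, say $D'_+$, take a parallel copy of $D'$ pushed slightly towards $D'_-$. It is disjoint from $\gamma$, and hence from the tube, and its boundary is isotopic to $\mu$ in $\partial V$. Sliding its boundary along the annulus in $\partial V$ back to $\mu$, disjointly from $S$, gives the required disc.
\item If the endpoints lie on different copies, $\gamma$ runs once around $V$. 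In this case we would instead use the other side of $S$, a disc parallel to $D'$ on the opposite side.
\end{itemize}

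\textbf{Main obstacle.} The hardest point is that $\gamma$ may be knotted in $B$, and that in the second case above both sides of $D'$ meet the tube. To handle this, push $D'$ slightly off itself to a disc $D''$ with boundary a parallel copy of $\mu$, lying on the side where the tube does not attach near $\partial V$. Because the tube is attached in the interior of $D'$, a thin collar near $\partial D'$ together with a small disc avoiding the feet of the tube give a disc meeting $S$ only along $\partial S$. The key check is that the tube's feet can be isotoped to lie on a single side. I expect this to need a genuine argument: either an Euler-characteristic and separation argument inside $B$, or the observation that $S$ separates $V$, so one complementary region contains a collar of $\mu$ in $\partial V$ together with the disc.
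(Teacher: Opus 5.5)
Your Step 1 is fine. The loop theorem is a valid substitute for the paper's innermost-curve argument on a meridian disc, which produces a compressing disc directly. Your first bullet in Step 3 is, in substance, the paper's final ``perturb the disc off $S$'' step. The proposal still does not close, for two reasons.

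\textbf{Step 2 is wrong about separating compressing discs.} Take $S$ to be a meridian disc $D'$ tubed along an arc on its $+$ side. A curve $c\subset D'$ parallel to $\partial D'$ and enclosing both feet is essential in $S$, since it cuts off a once-punctured torus. The subdisc of $D'$ bounded by $c$, pushed to the $-$ side, is then a compressing disc with separating boundary. So ``re-choosing'' $D_1$ is unjustified. You do not need it, though. An essential separating curve in a once-punctured torus is parallel to $\partial S$. Hence the collar annulus between $\partial S$ and $\partial D_1$, together with $D_1$, is already a disc bounded by $\partial S$. Pushing it slightly to the side of $S$ from which $D_1$ arrives makes it meet $S$ only in $\partial S$; this is the second case of the paper's proof.

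\textbf{The ``main obstacle'' is left open, and your suggested route fails.} $S$ does \emph{not} separate $V$: a longitude of $\partial V$, pushed slightly into $V$, meets $S$ transversely in exactly one point. Equivalently, $[S]=[D']$ generates $H_2(V,\partial V)\cong\mathbb{Z}$. Your separation remark in Step 1 is wrong for the same reason.

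The missing observation is two-sidedness. $S$ is orientable in the orientable manifold $V$, so $D_1$ approaches $S$ from one side. The compressed disc $D'$ is $S$ minus an annulus together with the two copies $D_1\times\{\pm1\}$. The slab $D_1\times[-1,1]$, which contains $\gamma$, lies on the same side of $D'$ at both copies. Hence both ends of $\gamma$ lie on the same copy $D'_\pm$; tubing a disc along an arc whose ends approach from opposite sides would give a non-orientable surface. So your second bullet never occurs. The push-off of $D'$ away from the tube finishes the proof, however $\gamma$ is knotted in $B$.

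Put differently, the fact you need is standard: a surface obtained by compressing $S$ can be isotoped off $S$ by pushing it away from the compressing disc. This is exactly how the paper concludes.
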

\begin{proof}
Let $D$ be a meridional disc of $S^1 \times D^2$. We can suppose that $D$ and $S$ are transverse, and that the number of connected components of their intersection is minimal. If $D$ and $S$ are disjoint, then $\partial D$ and $\partial S$ co-bound an annulus $A \subseteq \partial (S^1 \times D^2$), and $A \cup D$ is a disc whose intersection with $S$ is $\partial S$.

If $D$ and $S$ are not disjoint, then take a component $C \subseteq D \cap S$ that is innermost in $D$, and let $\Delta$ be the disc in $D$ bounded by $C$. 
Since the intersection between $S$ and $D$ is minimal, cutting $S$ along $D$ yields either a connected surface, or an annulus and a genus one surface. In the first case we can use two parallel copies of $\Delta $ to cap off the cut surface, while in the second case we can use $\Delta$ to cap off the annulus. Both procedures produce a disc. By slightly perturbing the disc, we can make it disjoint from the interior of $S$.
\end{proof}

We are now ready to prove the main result of this section. We denote by $K_0 \cup K_1$ the components of $L_K$, and recall from the statement of Theorem \ref{thm: band in annulus} that they are oriented as the boundary of a cabling annulus.

\begin{proof}[Proof of Theorem \ref{thm: band in annulus}]
 Of course, if the band $b$ can be isotoped inside a cabling annulus then $\widetilde{K}$ is an unknot. We consider the link $L=\widetilde{K}\cup J$, where $J$ denotes a meridian of the band $b$. Notice that the components of $L$ have linking number zero, 
 and hence $\widetilde{K}$ bounds some surface in the exterior of $J$. Let $S$ be such a surface of minimal genus. We can assume that $S$ intersects the disc $D$ bounded by $J$ in the tubular neighbourhood of $b$ only in one arc. In fact, since $S$ has minimal genus, it must be incompressible in the exterior of $J$ and one deduces, by an innermost argument, that up to isotopy $S$ must intersect $D$ only in one arc plus possibly some nested circles parallel to $J$. If there are such circles, then we can simply replace $J$ with the innermost one of them, and study a link isotopic to $L$, for which the minimal genus Seifert surface for $\widetilde{K}$ has the desired property.
 
We now determine the genus $g(S)$ of $S$.
First of all, notice that $g(S)>0$. In fact, if $S$ is a disc, then cutting $S$ along the arc of intersection with $D$ would produce two discs in $S^3$ whose boundaries are $K_0$ and $K_1$. However, this is not possible since $L_K$ is not the unlink.

We now show that $g(S)<2$. Assume that this is not the case, and observe that $E_{L}$ is an irreducible and boundary-irreducible $3$-manifold. Irreducibility follows from the fact that $L$ is not a split link, otherwise $\widetilde{K}$ would have to bound a disc in the exterior of $J$, and we have already shown that this is not the case; compressibility of any of the two components of $\partial E_{L}$ would have the same consequence.
Since $E_{L}$ is irreducible and boundary-irreducible, by \cite[Corollary~2.4]{gabaifol2}, we know that the surface $S$ remains genus minimising in its homology class in all but at most one filling (that we call \emph{exceptional)} of $E_L$ along the boundary component corresponding to $J$. Since $\widetilde{K}$ is the trivial knot in $S^3$, we know that the exceptional filling must be the one given by $\infty$-surgery on $J$. However, we now show that also the filling given by $0$-surgery on $J$ must be exceptional, hence leading to a contradiction. In fact, in the $0$-surgery on $J$, we can isotope the band $b$ so that it becomes disjoint from a cabling annulus $A$ (see Figure \ref{fig:band slide}). 
\begin{figure}
    \centering
    \includegraphics[width=0.6\linewidth]{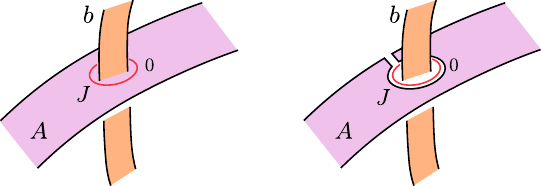}
    \caption{How to remove intersections between the band $b$ and the cabling annulus $A$ in the $0$-surgery along $J$.}
    \label{fig:band slide}
\end{figure}
Since $\widetilde{K}$ is obtained by oriented banding, $b\cup A$ is a genus one surface and, by checking its intersection with generators of the first homology group of the exterior of $\widetilde{K}$ in $S^3_0(J)$, one sees that it represents
the same homology class of $S$.

Summing up, we have that $S$ must have genus one. Using again that $L_K$ is not the unlink in $S^3$, we have that cutting $S$ along its intersection with $D$ produces an incompressible annulus $A'$ in the exterior of $L_K$, with $\partial A'=L_K$. Lemma \ref{lem: every annulus is cabling} implies that $A'$ is a cabling annulus for $L_K$, and $S$ is obtained by gluing the band $b$ to $A'$. In particular, the band and $A'$ are disjoint.

Since $\widetilde{K}$ is the unknot, by Lemma \ref{lem: surfaces in solid torus}, the knot $\widetilde{K}$ bounds a disc whose interior is disjoint from $S$, and in particular from the band $b$. The annulus obtained by gluing the band to this disc is a properly embedded incompressible annulus with boundary $L_K$, and therefore is isotopic in $E_{L_K}$ to a cabling annulus. In particular, the band $b$ can be isotoped in it, thus concluding the proof. 
\end{proof}

\section{Proof of the main theorem}\label{sec: main thm}
In this section we prove Theorem \ref{thm: main thm gen}.
Unless otherwise stated, we will have the following assumptions.

\begin{namedassumption}\label{assum 2}
We assume that either $n\neq 0$ or that $n=0$ and $K$ is a non-trivial knot in $S^3$, and that $K'$ is a knot such that $S^3_0(K') \cong S^3_0(W_K^n)$. 
Moreover, if $K$ is the $(p,q)$-torus knot, or the $(p,q)$-cable of a non-trivial knot, we further assume  $pq \ne n$. 
\end{namedassumption}

We also fix some notation. In this section, $\Sigma$ will always denote a genus one Seifert surface for $K'$, and $\widehat{\Sigma}$ will denote the surface in $S^3_0(K')$ obtained by capping $\Sigma$.

In the $0$-surgery on $K'$, let $\gamma$ be the \emph{dual curve} of $K'$, i.e., the core of the filled solid torus. 
Cutting $S^3_0(K')$ along the capped surface \(\widehat{\Sigma}\) yields a connected manifold with two boundary components, that we call \(N_{\widehat{\Sigma}}\).
Since \(\widehat{\Sigma}\) intersects $\gamma$ in one point, cutting $\gamma$ along \(\widehat{\Sigma}\) yields an arc $\alpha$ in \(N_{\widehat{\Sigma}}\), connecting the two boundary components. We will refer to $\alpha$ as the \emph{dual arc}.
Finally, we will denote by $M_{\Sigma}$ the complement in $S^3$ of a tubular neighbourhood for $\Sigma$. 

\begin{lem} \label{lem: cutting is cable}
    The manifold $N_{\widehat{\Sigma}}$ is diffeomorphic to the exterior of the {$(2,2n)$}-cable of $K$.
\end{lem}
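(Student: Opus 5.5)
By Proposition~\ref{prop: Sigma cap to F}, the diffeomorphism $\phi\colon S^3_0(K')\to S^3_0(W^n_K)$ sends $\widehat{\Sigma}$ to a surface isotopic to $F$. After composing $\phi$ with an ambient isotopy we may assume $\phi(\widehat{\Sigma})=F$. Then $\phi$ induces a diffeomorphism between $N_{\widehat{\Sigma}}$ and the manifold obtained by cutting $S^3_0(W^n_K)$ along $F$. It therefore suffices to show that $S^3_0(W^n_K)$ cut along $F$ is diffeomorphic to $E_{L_K}$, where $L_K$ is the $(2,2n)$-cable of $K$. (Orientation is irrelevant for the diffeomorphism type, but the argument keeps track of it anyway.)

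To identify this cut manifold I will use the decomposition from the proof of Lemma~\ref{lem: JSJ}. By construction, $S^3_0(W^n_K)=E_K\cup_{\partial E_K} S^3_{0,\bullet}(W)$, where $S^3_{0,\bullet}(W)\cong (P\times S^1)/_Q$ by Lemma~\ref{lem: gluing map}. The gluing identifies the fibres $\{pt\}\times S^1$ on $\partial_2 P\times S^1$ with the $n$-framed longitude of $K$, and the boundary $\partial_2 P$ with a meridian of $K$. By definition $F$ is the image of $\partial_0P\times S^1$, so cutting along $F$ simply undoes the gluing $Q$. The result is
\[
E_K\cup_{\partial E_K}(P\times S^1),
\]
glued along $\partial_2 P\times S^1$ with fibres going to $n$-longitudes and $\partial_2 P$ going to meridians.

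Next I recognise this as the exterior of a cable link. The exterior of the $(2,2n)$-cable of $K$ is $E_K\cup C$, where $C$ is the solid torus $V=N(K)$ with two parallel $(2,2n)$-curves removed. These two curves are isotopic inside $V$ to two parallel copies of the $n$-framed pushoff of the core, that is, to two fibres of the product fibration of $V$ whose fibres are $n$-longitudes. Hence $C\cong D\times S^1$ minus two fibres, which is $P\times S^1$, with the fibres on $\partial V$ being the $n$-longitudes and $\partial D$ being meridians. This matches the gluing described above, so the cut manifold is diffeomorphic to $E_{L_K}$. When $K$ is the unknot the same argument goes through with $E_K$ a solid torus.

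The main obstacle is the bookkeeping: checking that the gluing of $P\times S^1$ to $E_K$ in $S^3_0(W^n_K)$ really sends fibres to $n$-framed longitudes and $\partial_2P$ to meridians. This is where the twisting parameter $n$ of the double enters, and I will verify it directly from the satellite description. In $W^n_K$, the Whitehead link component that is drilled is the companion, and its meridian $\{pt\}\times S^1$ is glued to the $n$-framed longitude of $K$. I also need to confirm that cutting along $F$ produces exactly $P\times S^1$, with the two new boundary tori being $\partial_0P\times S^1$ and $\partial_1P\times S^1$. This follows immediately from Lemma~\ref{lem: gluing map}, since $F$ was defined as that gluing torus.
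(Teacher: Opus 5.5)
Your proposal is correct and follows essentially the same route as the paper: you use Proposition~\ref{prop: Sigma cap to F} to reduce to cutting $S^3_0(W^n_K)$ along $F$, which yields $E_K\cup(P\times S^1)$ with fibres glued to $n$-longitudes and $\partial_2 P$ to meridians, and then you identify the complement of the two cable components near $K$ with $P\times S^1$ under the same gluing. The only cosmetic difference is that you put a product structure with $n$-longitude fibres directly on $N(K)$, whereas the paper takes a standard solid torus minus two parallel cores and absorbs the twisting into the gluing map.
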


\begin{proof}
By Proposition \ref{prop: Sigma cap to F}, $N_{\widehat{\Sigma}}$ is diffeomorphic to the manifold obtained by cutting $S^3_0(W_K^n)$ along $F$. This is the result of gluing $E_K$ to the $3$-manifold $P \times S^1$, so that if $\partial_0,\partial_1,\partial_2$ are the three boundary components of $P$ and $ p \in \partial_2$, then $\partial_2 \times \{x\}$ is identified to the meridian of $K$, while the regular fiber $ \{p\}  \times S^1$ is identified to the $n$-longitude of $K$.

On the other hand, the exterior of the $(2,2n)$-cable of $K$ is obtained by gluing to $E_K$ the manifold $X$ given by the exterior in the standard solid torus $\mathbb{T}$ of two unlinked parallel copies of the core. Clearly, $X$ is diffeomorphic to $P \times S^1$. This diffeomorphism identifies:
\begin{itemize}
\item the meridian of $\mathbb{T}$ with $\partial_2 \times \{x\} \subseteq \partial P \times S^1$;
\item the meridians of the two parallel copies of the core to $\partial_0 \times \{x\}$ and $ \partial_1 \times \{x\}$ respectively;
\item the standard longitude of $\mathbb{T}$ to $ \{p \} \times S^1$. 
\end{itemize}
The gluing map used to obtain the exterior of the $(2,2n)$-cable sends the meridian of this solid torus to the meridian of $K$, and sends the standard longitude of the solid torus to a $n$-longitude for $K$. Since this is the same gluing map used to obtain $N_{\widehat{\Sigma}}$, the two manifolds are diffeomorphic. 
\end{proof}
Let $L_K=K_0 \cup K_1 $ denote the $(2,2n)$-cable of $K$, and let, for $i=0,1$, $\widehat{\Sigma}_i$ be the boundary of the tubular neighbourhood of $K_i$. We orient $L_K$ so that $K_0$ and $K_1$ have parallel orientations, and denote by $m_{K_i}, l_{K_i} \subseteq \widehat{\Sigma}_i$ the oriented meridian and the $n$-framed longitude of $K_i$, for $i=0,1$. We will call $\widehat{\varphi}$ the diffeomorphism $\widehat{\Sigma}_0 \to \widehat{\Sigma}_1$ that is represented by the matrix $\left(\begin{smallmatrix}
    -1 & 1 \\ 0 & 1
\end{smallmatrix}\right)$ in the bases $(m_{K_i},l_{K_i})$. 
\begin{lem} \label{lem: glue NSigma back}
   The manifold $S^3_0(K')$ is diffeomorphic to the manifold obtained from $E_{L_K}$ by identifying $\widehat{\Sigma}_0$  to $\widehat{\Sigma}_1 $ via $\widehat{\varphi}$.
\end{lem}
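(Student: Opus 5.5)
By Proposition~\ref{prop: Sigma cap to F}, a diffeomorphism $S^3_0(K')\cong S^3_0(W^n_K)$ may be isotoped so that $\widehat{\Sigma}$ goes to $F$. Hence $S^3_0(K')$ is diffeomorphic to $S^3_0(W^n_K)$, and the latter is recovered from its cut-open manifold along $F$ by regluing the two boundary copies of $F$. So the plan is to compute that regluing map in the coordinates of Lemma~\ref{lem: cutting is cable}. The manifold $S^3_0(W^n_K)$ is $E_K\cup S^3_{0,\bullet}(W)$, and by Lemma~\ref{lem: gluing map} the piece $S^3_{0,\bullet}(W)$ is $(P\times S^1)/_Q$. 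Here $Q\colon \partial_0\times S^1\to\partial_1\times S^1$ is given by $\left(\begin{smallmatrix}-1&-1\\0&1\end{smallmatrix}\right)$ in the standard peripheral bases $(\partial_i,\{p_i\}\times S^1)$. Cutting along $F$ is the same as undoing $Q$. Therefore it suffices to express $Q$ in the bases $(m_{K_i},l_{K_i})$ after identifying $E_K\cup (P\times S^1)$ with $E_{L_K}$ as in Lemma~\ref{lem: cutting is cable}.

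Next, I would track curves through the identification in the proof of Lemma~\ref{lem: cutting is cable}. The curves $\partial_i\times\{x\}$ correspond to the meridians of $K_i$, up to sign. The regular fibre $\{p_i\}\times S^1$ on $\partial_i\times S^1$ is isotopic in $P\times S^1$ to the fibre on $\partial_2\times S^1$, which is the standard longitude of $\mathbb{T}$ and is glued to the $n$-longitude of $K$. A parallel push-off of the $n$-framed longitude of $K$ on the boundary of a neighbourhood of $K_i$ is exactly the $n$-framed longitude $l_{K_i}$, since $K_i$ is itself an $n$-framed pushoff of $K$. So on each torus we get $\{p_i\}\times S^1\mapsto \pm l_{K_i}$ and $\partial_i\mapsto \pm m_{K_i}$.

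The remaining, and main, obstacle is bookkeeping of orientations. The curve $\partial_i$ carries the orientation induced as the boundary of $P$, which for the two inner boundary components corresponds to the meridians of $K_0,K_1$ with some signs. The fibres must be oriented consistently with $K_0,K_1$ having parallel orientations. Finally, $\widehat{\Sigma}_i$ must be oriented so that the gluing reverses the induced boundary orientations. I expect the net effect to be a sign change on exactly one basis vector on one side, converting $\left(\begin{smallmatrix}-1&-1\\0&1\end{smallmatrix}\right)$ into $\left(\begin{smallmatrix}-1&1\\0&1\end{smallmatrix}\right)$. This amounts to conjugating by $\mathrm{diag}(1,-1)$ or $\mathrm{diag}(-1,1)$, or equivalently replacing $l$ by $-l$ consistently on both sides.

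I would check this with a homological sanity test: the glued manifold must have $H_1\cong\Z$. If a sign is ambiguous I would determine it this way. One can also use that the regluing must restore the linking behaviour of the dual arc $\alpha$, which runs from $\widehat{\Sigma}_0$ to $\widehat{\Sigma}_1$. Once the matrix matches $\widehat{\varphi}$, the diffeomorphism of cut manifolds extends over the regluing, giving the lemma.
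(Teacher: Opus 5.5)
Your outline is the paper's own proof: cut along $F$ using Proposition~\ref{prop: Sigma cap to F}, identify the result with $E_{L_K}$ as in Lemma~\ref{lem: cutting is cable}, and rewrite the regluing map $Q$ of Lemma~\ref{lem: gluing map} in the bases $(m_{K_i},l_{K_i})$. The gap is that this rewriting is the only content of the lemma beyond citing earlier results, and you leave it as an expectation. Moreover, the check you propose for fixing an ambiguous sign would not work.

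In $H_1(E_{L_K})$, which is freely generated by $m_{K_0}$ and $m_{K_1}$, the fibres $l_{K_0}$ and $l_{K_1}$ are homologous. Gluing via $\left(\begin{smallmatrix}-1&\epsilon\\0&1\end{smallmatrix}\right)$ therefore imposes $m_{K_0}=-m_{K_1}$ and $l_{K_0}=\epsilon\, m_{K_1}+l_{K_1}$, hence $\epsilon\, m_{K_1}=0$. This gives $H_1\cong\Z$ for \emph{both} $\epsilon=\pm1$. Yet this off-diagonal sign is exactly what distinguishes $\widehat{\varphi}$ from $Q$, and it is not a harmless convention, since it records the handedness of the clasp. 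For $K$ the unknot and $n=1$, where $E_{L_K}\cong T^2\times I$, one choice yields a torus bundle with monodromy of trace $3$ and the other one of trace $1$: the $0$-surgeries on the figure-eight knot and on a trefoil. Also, the sign change must be the same on both tori, as you say at the end, not ``on one side''. Changing a sign on one torus only gives a matrix of determinant $+1$, i.e. an identification preserving the induced boundary orientations, which cannot be the regluing of an oriented manifold.

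What is missing is the short orientation argument, which the paper's proof states as a fact. In the model of Lemma~\ref{lem: cutting is cable}, take $P\times\{x\}$ to be a meridian disc of $\mathbb{T}$ with two small discs $\delta_0,\delta_1$ around the cores removed. The identification $P\times S^1\cong X$ must be orientation preserving, since it is part of an orientation-preserving identification with $E_{L_K}\subset S^3$.

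Orient $K_0,K_1$ along the $S^1$-factor, giving parallel orientations. Then the disc, oriented like $P$, meets each $K_i$ positively. So the positive meridian $m_{K_i}$ is $\partial\delta_i$ with the boundary orientation of $\delta_i$. The curve $\partial_i$ instead carries the boundary orientation of $P$, which is the opposite one, so $m_{K_i}=-\partial_i$.

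The fibre $\{p_i\}\times S^1$ is a push-off of $K_i$ oriented like $K_i$. As you correctly observed, it is the $n$-framed longitude, so $l_{K_i}=\{p_i\}\times S^1$. The only other admissible choice is to orient both $K_i$ against the fibres. This negates both basis vectors on both tori and leaves the matrix unchanged.

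Hence the change of basis is $\mathrm{diag}(-1,1)$ on both tori, and $\mathrm{diag}(-1,1)\,Q\,\mathrm{diag}(-1,1)=\left(\begin{smallmatrix}-1&1\\0&1\end{smallmatrix}\right)=\widehat{\varphi}$. This is precisely the step your outline needs.
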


\begin{proof}
    Since $S^3_{0}(W_K^n)$ is diffeomorphic to $S^3_{0}(K')$, the proof follows directly from Lemma~\ref{lem: cutting is cable} and Lemma~\ref{lem: gluing map} and from the fact that, up to reversing the orientation of both components of $L_K$ we have
    \begin{align*}
    &m_{K_i} =-\partial_i\\
    &l_{K_i} =\{p_i\}\times S^1,\quad p_i\in \partial_i 
    \end{align*}
    for $i=0,1$.
\end{proof}

We recall that $\alpha \subset N_{\widehat{\Sigma}}$ is the dual arc of $K'$, and we identify it with its image in $E_{L_K}$ via the diffeomorphism of Lemma \ref{lem: cutting is cable} (see Figure \ref{fig:handlebody}). 
Let $H_\alpha$ denote the genus two knotted handlebody obtained by taking the union of a tubular neighbourhood of $L_K$ and of a tubular neighbourhood of $\alpha$.

\begin{prop} \label{prop: MSigma handlebody}
    The manifold $M_\Sigma$ is diffeomorphic to the closure $\overline{S^3 \setminus H_{\alpha}}$ of $S^3 \setminus H_{\alpha}$. This diffeomorphism sends a canonical longitude of $K'$ in $ M_\Sigma$ into a meridian of $\alpha$.
\end{prop}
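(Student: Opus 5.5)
We will compare two descriptions of the same manifold: $M_\Sigma$ is $S^3_0(K')$ cut along $\widehat{\Sigma}$ with a neighbourhood of the dual arc removed, and $\overline{S^3\setminus H_\alpha}$ is $N_{\widehat{\Sigma}}\cong E_{L_K}$ with a neighbourhood of $\alpha$ removed.

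\emph{Step 1: $M_\Sigma$ inside the surgery.} Write $S^3_0(K') = E_{K'}\cup V$, where $V$ is the filling solid torus with core $\gamma$. Take a product neighbourhood $\Sigma\times[-1,1]\subset E_{K'}$ and extend it across $V$ to a product neighbourhood $\widehat{\Sigma}\times[-1,1]$. The part of this neighbourhood lying in $V$ is a thickened meridional disc $D\times[-1,1]$. Then $S^3_0(K')\setminus \widehat{\Sigma}\times(-1,1)$ is a copy of $N_{\widehat{\Sigma}}$. Inside it, $V\setminus D\times(-1,1)$ is a $3$-ball $B$, and $B$ is exactly a tubular neighbourhood of the dual arc $\alpha$. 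Its boundary consists of the annulus $\partial V\setminus(\partial D\times(-1,1))$ together with the two discs $D\times\{\pm1\}$. Removing $B$ leaves
\[
E_{K'}\setminus \Sigma\times(-1,1) = M_\Sigma .
\]
So $M_\Sigma \cong N_{\widehat{\Sigma}}\setminus \nu(\alpha)$. The identification is compatible with the boundary structure: the annulus part of $\partial\nu(\alpha)$ is the part of $\partial E_{K'}$ outside $\Sigma\times(-1,1)$.

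\emph{Step 2: transport to $S^3$.} Lemma~\ref{lem: cutting is cable} gives a diffeomorphism $N_{\widehat{\Sigma}}\cong E_{L_K}=\overline{S^3\setminus \nu(L_K)}$, and we regard $\alpha$ as a properly embedded arc joining the two boundary tori, as in the paper. Removing a tubular neighbourhood of $\alpha$ from $E_{L_K}$ gives the complement of $\nu(L_K)\cup\nu(\alpha)=H_\alpha$. Hence
\[
M_\Sigma\cong \overline{S^3\setminus H_\alpha}.
\]
One check is needed here: $\nu(\alpha)$ attaches to $\nu(L_K)$ as a $1$-handle, so $H_\alpha$ is a genus two handlebody. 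This holds because the arc meets each torus in a single endpoint.

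\emph{Step 3: the longitude.} A canonical longitude $\lambda$ of $K'$ in $M_\Sigma$ is a parallel copy of $K'=\partial\Sigma$ on $\partial E_{K'}$, pushed off $\Sigma$. Under Step 1, $\partial E_{K'}$ minus the collar of $\partial\Sigma$ is precisely the annulus $\partial V\setminus(\partial D\times(-1,1))$. This annulus is $\partial\nu(\alpha)$ with its two end discs removed. The longitude $\lambda$ is the core circle of this annulus, since it is parallel to $\partial D$, the meridian of $V$, which is the $0$-framed longitude. The core circle of that annulus bounds a disc in $\nu(\alpha)$ transverse to $\alpha$, so it is a meridian of $\alpha$. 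Transporting through Step 2 proves the second claim.

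The main obstacle is bookkeeping rather than anything conceptual. One must make sure the product neighbourhood of $\widehat{\Sigma}$ restricts correctly to both $E_{K'}$ and $V$, so that the ball $B$ really is a regular neighbourhood of $\alpha$. One must also confirm that the zero framing makes $\partial D$ isotopic on $\partial E_{K'}$ to the Seifert longitude $\partial\Sigma$. That equality of slopes is exactly the statement that the filling is $0$-surgery.
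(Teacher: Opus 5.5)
Your proof is correct and follows the same route as the paper. Both view $M_\Sigma$ as $S^3_0(K')$ minus a neighbourhood of $\gamma\cup\widehat{\Sigma}$, hence as $N_{\widehat{\Sigma}}$ minus a neighbourhood of the dual arc $\alpha$, then apply Lemma~\ref{lem: cutting is cable}, with the canonical longitude of $K'$ (the meridian of $\gamma$, by the zero framing) becoming a meridian of $\alpha$; your version just spells out the product-neighbourhood bookkeeping that the paper leaves implicit.
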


\begin{proof}
The manifold $M_{\Sigma}$ is obtained by cutting $E_{K'}$ along the Seifert surface $\Sigma$. Recall that $E_{K'}$ is diffeomorphic to the exterior of the dual curve $\gamma$ in $ S^3_0(K')$, with a diffeomorphism that identifies the meridian of $\gamma$ to the canonical longitude of $K'$. In other words, removing a tubular neighbourhood of $\gamma \cup \widehat{\Sigma}$ from $S^3_0(K')$ yields $M_\Sigma$, and therefore $M_\Sigma$ is diffeomorphic to the manifold obtained by removing from $N_{\widehat{\Sigma}}$ a tubular neighbourhood of the dual arc $\alpha$, and this, by Lemma  \ref{lem: cutting is cable}, is exactly $\overline{S^3 \setminus H_{\alpha}}$. This concludes the proof, since cutting along $\widehat{\Sigma}$ sends a meridian of $\gamma$ to a meridian of $\alpha$.
\end{proof}

\begin{figure}
    \centering
    \includegraphics[width=0.4\linewidth]{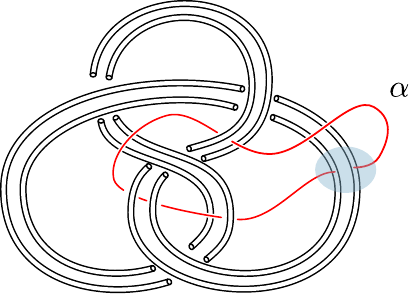}
    \caption{The picture shows the $(2,0)$-cable $L_K$ of $K$ in the case when $K$ is the figure-eight knot. The exterior of the arc $\alpha$ in $E_{L_K}$ is diffeomorphic to the manifold $M_{\Sigma}$. The shaded disc is relevant for Figure~\ref{fig:Delta}.}
    \label{fig:handlebody}
\end{figure}


For $i=0,1$, let $\Sigma_i$ be the once-punctured torus $\Sigma_i=\widehat{\Sigma}_i \cap \partial H_\alpha$.
By construction, the diffeomorphism $\widehat{\varphi}\colon \widehat{\Sigma}_0 \to \widehat{\Sigma}_1 $ sends one endpoint of $\alpha$ to the other, and therefore $\widehat{\varphi}$ restricts to a diffeomorphism $\varphi \colon \Sigma_0 \to \Sigma_1 $.

\begin{prop}\label{prop: gluing handlebody gives S3}
    Gluing $\Sigma_0 \times [0,1]$ to $\overline{S^3 \setminus H_{\alpha}}$  along $ \Sigma_0 \times \{0,1\}$ via the map 
    \[
    \psi = \id \times \{0\} \sqcup \varphi \times \{1\}
    \] yields $E_{K'}$. Moreover, there exists an extension $\widetilde{\psi}$ of $\psi$ on $ \partial (\Sigma_0 \times [0,1])$ such that gluing $\Sigma_0 \times [0,1] $ to $\overline{S^3 \setminus H_{\alpha}}$  via $ \widetilde{\psi}$ yields $S^3$. 
\end{prop}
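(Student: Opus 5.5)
The first claim follows by reversing the cutting procedure. Cutting $E_{K'}$ along $\Sigma$ yields $M_\Sigma$, with two copies $\Sigma^\pm$ of $\Sigma$ on $\partial M_\Sigma$. Regluing $\Sigma^+$ to $\Sigma^-$ is the same as gluing a product $\Sigma\times[0,1]$ with $\Sigma\times\{0\}\mapsto\Sigma^-$ and $\Sigma\times\{1\}\mapsto\Sigma^+$. I would transport this to $\overline{S^3\setminus H_\alpha}$ through Proposition~\ref{prop: MSigma handlebody}, whose diffeomorphism comes from cutting $S^3_0(K')$ along $\widehat\Sigma$ and then drilling $\alpha$. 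Under it, $\Sigma^\pm$ go to the once-punctured tori $\Sigma_0,\Sigma_1\subset \partial H_\alpha$. By Lemma~\ref{lem: glue NSigma back}, the regluing map $\widehat\Sigma_0\to\widehat\Sigma_1$ recovering $S^3_0(K')$ is $\widehat\varphi$, and it matches the endpoints of $\alpha$. After a small isotopy we may take $\widehat\varphi$ to send the disc $\widehat\Sigma_0\cap\nu(\alpha)$ onto $\widehat\Sigma_1\cap\nu(\alpha)$. Its restriction to the complements is exactly $\varphi$, so gluing via $\id\times\{0\}\sqcup\varphi\times\{1\}$ recovers the exterior of $\gamma\cup\widehat\Sigma$ with the product region reinserted, that is, the exterior of $\gamma$ in $S^3_0(K')$. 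This is $E_{K'}$. The one thing to check is that orientations on the two sides match, so that $\id$ and $\varphi$ (rather than their compositions with reflections) are the correct maps. This follows from the convention in Lemma~\ref{lem: glue NSigma back}.

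For the second claim, note that after the first gluing the boundary of the result is a torus $\partial E_{K'}$. It is formed by the annulus $\partial\Sigma_0\times[0,1]$ together with the annulus $\partial H_\alpha\setminus(\Sigma_0\cup\Sigma_1)=\partial\nu(\alpha)\cap\overline{S^3\setminus H_\alpha}$. Here $\partial\Sigma_0\times[0,1]$ plays the role of a collar of the longitude side, and the cores of $\partial\nu(\alpha)$ are meridians of $\alpha$, which by Proposition~\ref{prop: MSigma handlebody} are canonical longitudes of $K'$. Gluing a solid torus to $E_{K'}$ along the meridian of $K'$ produces $S^3$. Instead of adding a solid torus, I would realise this filling by an extension $\widetilde\psi$ of $\psi$ to all of $\partial(\Sigma_0\times[0,1])$, attaching the remaining annulus $\partial\Sigma_0\times[0,1]$ to the annulus $\partial\nu(\alpha)$ with a twist.

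Concretely, $\partial(\Sigma_0\times[0,1])$ is a genus-two surface and $\partial\overline{S^3\setminus H_\alpha}=\partial H_\alpha$ is a genus-two surface. The map $\psi$ already identifies $\Sigma_0\times\{0,1\}$ with $\Sigma_0\cup\Sigma_1$. The complementary annuli must be matched by a map $\partial\Sigma_0\times[0,1]\to \partial H_\alpha\cap\partial\nu(\alpha)$ extending $\psi$ on the boundary circles. Up to isotopy rel boundary, the choices of this map differ by powers of the Dehn twist along the core. With no twist (the "product" extension), the resulting closed manifold is the $0$-filling, namely $S^3_0(K')$ with $\gamma$ reinserted. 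Changing the extension by the $k$-th power of the twist changes the filling slope by $k$ times the meridian of $\alpha$, i.e.\ by $k$ longitudes of $K'$. Hence the filled manifold ranges over $S^3_{1/k}(K')$ after reparametrisation. Choosing the extension that realises the $\infty$-filling, i.e.\ the one whose core curve $\{x\}\times[0,1]$ closes up with an arc on $\partial\nu(\alpha)$ to a meridian of $K'$, yields $S^3$.

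The main obstacle is this bookkeeping of slopes. One has to verify that closing the curve $\{x\}\times[0,1]$, $x\in\partial\Sigma_0$, through the glued region gives a meridian of $K'$ for an appropriate extension. Equivalently, one must show that the extensions of $\psi$ realise all fillings of the form meridian $+\,k\cdot$longitude. I would verify this in the model where $\Sigma\times[0,1]$ is a collar of $\Sigma$ in $S^3$. There the identity extension recovers $S^3$ directly, since $S^3=M_\Sigma\cup\nu(\Sigma)$ and $\nu(\Sigma)\cong\Sigma\times[0,1]$ glued along all of its boundary. I would then check that this matches some extension $\widetilde\psi$ of $\psi$ under the identification $M_\Sigma\cong\overline{S^3\setminus H_\alpha}$.
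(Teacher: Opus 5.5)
Your argument is essentially the paper's: the first part transports the regluing of $\Sigma^\pm$ through the identification $M_\Sigma\cong\overline{S^3\setminus H_\alpha}$ with $\theta(s,1)=\varphi\circ\theta(s,0)$. Your final paragraph, which transports $S^3=M_\Sigma\cup\nu(\Sigma)$ with $\nu(\Sigma)\cong\Sigma\times[0,1]$ through the same identifications, is exactly how the paper obtains $\widetilde\psi$, and compatibility with $\psi$ then holds automatically by construction. Drop the twist-bookkeeping paragraph: it is unnecessary and contains a false claim, because folding $\partial\Sigma_0\times[0,1]$ onto $\partial\nu(\alpha)\cap\overline{S^3\setminus H_\alpha}$ always fills $E_{K'}$ along a slope meeting their common core (a longitude of $K'$) once, so the extensions realise exactly the slopes $\mu+k\lambda$, never the $0$-filling, and ``no twist'' has no intrinsic meaning.
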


\begin{proof}
We identify $M_{\Sigma}$ with $E_{K'}\setminus \big{(}\Sigma\times(0,1)\big{)}$. By Lemma \ref{lem: glue NSigma back}, we can assume that, for $i=0,1$, the diffeomorphism $\theta$ between $M_{\Sigma}$ and $\overline{S^3 \setminus H_{\alpha}}$ given by Proposition~\ref{prop: MSigma handlebody} maps $\Sigma\times \{i\}\subset \partial M_{\Sigma}$ to $\Sigma_i$ and satisfies $\theta(s,1)=\varphi \circ \theta(s,0)$ for all $s\in \Sigma$.

We identify $\Sigma\times [0,1]$ with $\Sigma_0\times [0,1]$ via the map $\theta_{|\Sigma\times \{0\}}\times \rm id$, and $M_{\Sigma}$ with $\overline{S^3 \setminus H_{\alpha}}$ via $\theta$. 
Gluing $M_\Sigma$ to $\Sigma\times [0,1]$ along $\id_{\Sigma \times \{0\}} \sqcup \id_{\Sigma \times \{1\}}$ produces $E_{K'}$, and this gluing can be extended to the whole boundary of $\Sigma \times [0,1]$ to produce $S^3$.
If we take the corresponding gluing between $\Sigma_0\times [0,1]$ and $\overline{S^3 \setminus H_{\alpha}}$ via the above identifications, we obtain the desired result.
\end{proof}

Consider the properly embedded, linearly independent arcs $\overset{\circ}{m}_{K_0}, \overset{\circ}{l}_{K_0}$ in $\Sigma_0$, corresponding to the meridian and the $n$-framed longitude of $K_0$. By crossing them with the interval $I$, they define compressing discs for the genus two handlebody $\Sigma_{0}\times I$, that we denote by $D_{m}$ and $D_l$ respectively. By Proposition \ref{prop: gluing handlebody gives S3}, we know that gluing a tubular neighborhood of these discs to $\overline{S^3 \setminus H_{\alpha}}$  via the restriction of $\widetilde{\psi}$ and then capping with a ball produces $S^3$. 

\begin{prop}\label{prop: Ktilde}
The manifold obtained by gluing a neighbourhood of the compressing disc $D_m$ to $\overline{S^3 \setminus H_{\alpha}}$  along the restriction of $\widetilde{\psi}$ is the exterior of a knot $\widetilde{K}\subset S^3$. Moreover $\widetilde{K}$ is a banding of $L_K$ along a band whose core is the dual arc $\alpha$.
\end{prop}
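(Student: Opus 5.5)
The plan is to read the gluing of a neighbourhood $D_m\times[-\epsilon,\epsilon]$ of $D_m$ to $\overline{S^3\setminus H_\alpha}$ as the attachment of a $2$-handle along the curve $\widetilde{\psi}(\partial D_m)\subset\partial H_\alpha$. Dually, this operation cuts a disc out of the handlebody $H_\alpha$. Since the outcome lies in $S^3$ (by Proposition~\ref{prop: gluing handlebody gives S3} the remaining gluings plus a ball give back $S^3$), the manifold we obtain is the complement in $S^3$ of $H_\alpha$ cut along a properly embedded disc $\Delta\subset H_\alpha$ with $\partial\Delta=\widetilde{\psi}(\partial D_m)$. The proof then comes down to identifying $\Delta$ and showing that cutting $H_\alpha$ along it produces a solid torus $V$ whose core is a banding of $L_K$ along a band with core $\alpha$. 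The exterior of that core is then $\overline{S^3\setminus V}$, which is the exterior of a knot $\widetilde K$.

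\textbf{Step 1: compute $\partial D_m$.} We have $\partial D_m=(\overset{\circ}{m}_{K_0}\times\{0\})\cup(\partial\overset{\circ}{m}_{K_0}\times I)\cup(\overset{\circ}{m}_{K_0}\times\{1\})$. Under $\psi$, the arc $\overset{\circ}{m}_{K_0}\times\{0\}$ goes to the meridional arc of $K_0$ in $\Sigma_0$. The arc $\overset{\circ}{m}_{K_0}\times\{1\}$ goes to $\varphi(\overset{\circ}{m}_{K_0})\subset\Sigma_1$. Since $\widehat\varphi$ sends $m_{K_0}$ to $-m_{K_1}$, this image is a meridional arc of $K_1$ up to orientation. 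The two vertical arcs $\partial\overset{\circ}{m}_{K_0}\times I$ are sent by $\widetilde\psi$ to two arcs on the tube $\partial\nu(\alpha)$, each running once along $\alpha$ from $\widehat\Sigma_0$ to $\widehat\Sigma_1$; they are parallel copies of $\alpha$. So $\widetilde\psi(\partial D_m)$ runs around a meridian of $K_0$ minus a small arc, travels along $\alpha$, runs around a meridian of $K_1$ minus a small arc, and comes back along $\alpha$.

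\textbf{Step 2: find $\Delta$.} I take $\Delta$ to be the union of three pieces: a meridian disc of $\nu(K_0)$, a meridian disc of $\nu(K_1)$, and the rectangle in $\nu(\alpha)$ given by a strip spanned by the two parallel copies of $\alpha$. Its boundary is the curve of Step 1 (the shaded disc in Figure~\ref{fig:handlebody} should be this picture). Cutting $H_\alpha$ along $\Delta$ has the following effect. Cutting $\nu(K_0)$ and $\nu(K_1)$ along their meridian discs turns them into two $3$-balls, namely neighbourhoods of the arcs $K_0\setminus\text{pt}$ and $K_1\setminus\text{pt}$. Cutting the $1$-handle $\nu(\alpha)$ along the strip splits it into two parallel $1$-handles, which together form $b(\partial I\times I)$ for a band $b$ with core $\alpha$. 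The result is a regular neighbourhood of $(L_K\setminus b(I\times\partial I))\cup b(\partial I\times I)$, and this is exactly the banding $\widetilde K$ of Definition~\ref{defn: banding}. It is a single closed curve, because the two components are joined, so the neighbourhood is a solid torus.

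\textbf{Main obstacle.} The delicate point is making sure that the extension $\widetilde\psi$ really sends the vertical arcs of $\partial D_m$ to arcs parallel to $\alpha$ that cobound an embedded strip, so that the band is untwisted relative to $\partial H_\alpha$ and has core exactly $\alpha$. A priori, $\widetilde\psi$ on $\partial\Sigma_0\times I$ could wrap these arcs around the tube. To handle this I will argue as follows. $\widetilde\psi$ restricted to $\partial\Sigma_0\times I$ is a homeomorphism onto the annulus $\partial H_\alpha\cap\partial\nu(\alpha)$ extending $\psi$ on the two boundary circles. Such homeomorphisms are unique up to isotopy rel boundary and powers of a Dehn twist along the core. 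A Dehn twist only changes the framing of the band, that is, it adds full twists to $b$, and any such band is still a legitimate band with core $\alpha$, which is all the statement asserts. I also need to check that $\Delta$ is embedded. This holds because $\alpha$ meets $\widehat\Sigma_i$ only at its endpoints, and the meridian discs can be chosen to pass through those endpoints. Finally, I will note that the orientation of the resulting band (whether the banding is oriented) is not needed here and can be deferred to the next step of the paper.
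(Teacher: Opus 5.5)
Your proposal is correct and follows the paper's proof. You compute $\widetilde{\psi}(\partial D_m)$ as two meridional arcs joined by two arcs on $\partial\nu(\alpha)$, and exhibit the disc $\Delta\subset H_\alpha$ containing $\alpha$ with that boundary. You then identify the gluing of $\nu(D_m)$ with adding $\nu(\Delta)$ back to $\overline{S^3\setminus H_\alpha}$, and observe that $H_\alpha\setminus\nu(\Delta)$ is a neighbourhood of the banding along the band transverse to $\Delta$.

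Your extra discussion of a possible Dehn twist in $\widetilde{\psi}$ on $\partial\Sigma_0\times I$, which only adds full twists to the band, makes explicit a point the paper leaves implicit. One small correction: the identification of the result with a complement in $S^3$ comes from the existence of $\Delta$ and the uniqueness of $2$-handle attachment, not from Proposition~\ref{prop: gluing handlebody gives S3}.
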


\begin{proof}
Orient the arc $\overset{\circ}{m}_{K_0}$ with the orientation induced by $m_{K_0}$, and $D_{m}=m_{K_0}\times I$ with the product orientation. Since $\psi(m_{K_0})=-m_{K_1}$, the image of $\partial D_m$, oriented with the boundary orientation, via $\widetilde{\psi}$ is given by 
\[\overset{\circ}{m}_{K_0} \cup j_1 \cup j_2 \cup  \overset{\circ}{m}_{K_1},
\] where $j_1, j_2$ are disjoint properly embedded arcs in the boundary of the tubular neighbourhood of $\alpha$, and where $\overset{\circ}{m}_{K_1}$ is the oriented arc in $\Sigma_1$ corresponding to $m_{K_1}$. 

There exists a disc $\Delta$, properly embedded in $H_\alpha$, that contains $\alpha$ and whose boundary is $\widetilde{\psi}(\partial D_m)$, see Figure \ref{fig:Delta}. 
\begin{figure}
    \centering
    \includegraphics[width=0.75\linewidth]{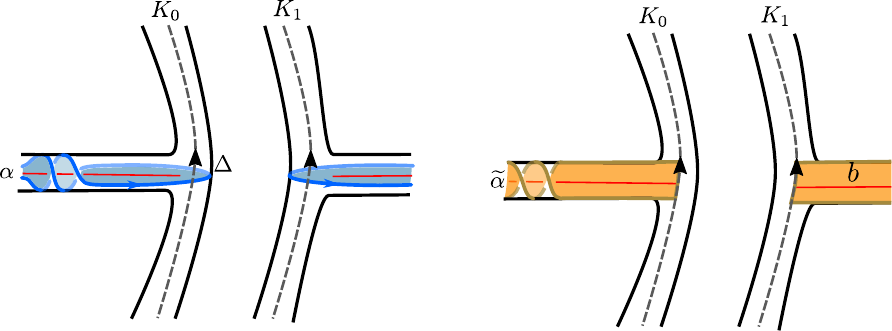}
    \caption{The figure shows the disc $\Delta$ and the band $b$ in $H_{\alpha}$, in a zoomed-in view of the region shaded in Figure~\ref{fig:handlebody}.}
    \label{fig:Delta}
\end{figure}
Gluing a neighbourhood of the compressing disc $D_m$ to $\overline{S^3 \setminus H_{\alpha}}$  coincides with gluing a neighbourhood of $\Delta$ to $S^3 \setminus H_\alpha$. 
Let $\widetilde{\alpha} \subseteq \Delta$ be an arc that extends $\alpha$ and connects $K_0$ to $K_1$.
Let $b$ be the band contained in $H_\alpha$ joining $K_0$ to $K_1$, whose core is $ \widetilde{\alpha}$, and that intersects $\Delta$ transversely and only in $\widetilde{\alpha}$. 
If $\widetilde{K}$ denotes the knot obtained by banding $L_K$ along $b$, then $H_\alpha$ minus a tubular neighbourhood of $ \Delta$ is a tubular neighbourhood of $\widetilde{K}$, thus concluding the proof.
\end{proof}
\begin{remark} \label{rmk: banding unoriented}
Recall that we oriented $L_K$ so that its components are parallel. With respect to this orientation, the banding from Proposition \ref{prop: Ktilde} is not oriented. However, if we change the orientation of one of the components, then it becomes an oriented banding. Notice that, with this orientation, $L_K$ is the boundary of a cabling annulus.
\end{remark}

\begin{prop}\label{prop: Ktilde is unknot}
    The manifold obtained by $+1$ surgery on $\widetilde{K}$ is diffeomorphic to $S^3$, via a diffeomorphism that maps the meridian of $\alpha$ to $K'$. In particular $\widetilde{K}$ is the unknot.
\end{prop}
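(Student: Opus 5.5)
We argue through Proposition~\ref{prop: gluing handlebody gives S3} and Proposition~\ref{prop: Ktilde}. The genus two handlebody $\Sigma_0\times[0,1]$ is the union of a neighbourhood of $D_m$, a neighbourhood of $D_l$, and a $3$-ball. By Proposition~\ref{prop: gluing handlebody gives S3}, gluing it to $\overline{S^3\setminus H_\alpha}$ via $\widetilde{\psi}$ gives $S^3$. We perform this gluing in three stages.

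First we glue a neighbourhood of $D_m$. By Proposition~\ref{prop: Ktilde}, this produces the exterior $E_{\widetilde{K}}$. Next we glue a neighbourhood of $D_l$, which amounts to a Dehn filling of $E_{\widetilde{K}}$ along the curve $c=\widetilde{\psi}(\partial D_l)$ on $\partial E_{\widetilde{K}}$. Finally, capping with the ball yields $S^3$. So $S^3$ is a Dehn filling of $E_{\widetilde{K}}$ along $c$, and it remains to identify the slope of $c$.

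By construction, $\partial D_l$ consists of $\overset{\circ}{l}_{K_0}$, two arcs running along the tube around $\alpha$, and the arc $\widetilde{\psi}(\overset{\circ}{l}_{K_0})$ in $\Sigma_1$. Since $\widehat{\varphi}$ sends $l_{K_0}$ to $m_{K_1}+l_{K_1}$, this last arc is the arc of $\Sigma_1$ representing $m_{K_1}+l_{K_1}$. Isotoped onto $\partial N(\widetilde{K})=\partial(H_\alpha\setminus N(\Delta))$, the curve $c$ therefore runs once along $\widetilde{K}$: it follows the $n$-framed longitudes of $K_0$ and $K_1$, joined across the band $b$. It also picks up one extra meridian of $K_1$, which is a meridian of $\widetilde{K}$. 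Hence $c$ meets a meridian of $\widetilde{K}$ exactly once.

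This is the main obstacle: we must check that the framing of $c$ is $+1$. The framing of $c$ relative to the Seifert framing of $\widetilde{K}$ combines three contributions. The first comes from the two $n$-framed longitudes of $L_K$, including the self-linking between $K_0$ and $K_1$; with the component reversed as in Remark~\ref{rmk: banding unoriented}, so that the banding is oriented and $L_K$ bounds the cabling annulus, the $n$-framings cancel the linking number $n$ between the components. The second is the twisting of $b$ relative to the longitude arcs. The third is the extra meridian, which contributes $+1$ with the sign coming from the $-1$ entry of $\widehat{\varphi}$ together with the orientation reversal. I would compute the linking number $\operatorname{lk}(c,\widetilde{K})$ using a Seifert surface for $\widetilde{K}$ built from the annulus bounded by the push-offs and the band. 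This should give $\operatorname{lk}(c,\widetilde{K})=1$, so that $c$ is the $+1$ slope and $S^3_{+1}(\widetilde{K})\cong S^3$. If the sign comes out as $-1$, the same conclusion follows, since either way $\pm1$-surgery yields $S^3$.

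For the correspondence of curves, the meridian of $\alpha$ lies in $\overline{S^3\setminus H_\alpha}$ and is disjoint from the glued pieces. By Proposition~\ref{prop: MSigma handlebody} it corresponds to the canonical longitude of $K'$, and in the gluing of Proposition~\ref{prop: gluing handlebody gives S3} that recovers $S^3\supset E_{K'}$, the longitude of $K'$ is isotopic to $K'$. Tracking this through the identification of the gluing with $+1$-surgery on $\widetilde{K}$ shows that the meridian of $\alpha$ is sent to $K'$. Finally, by Gordon--Luecke, a nontrivial surgery on a knot in $S^3$ yielding $S^3$ forces the knot to be trivial, so $\widetilde{K}$ is the unknot.
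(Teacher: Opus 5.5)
\textbf{Gap: the sign of the framing is not established.} Your reduction is the same as the paper's: fill $E_{\widetilde K}$ along $c=\widetilde\psi(\partial D_l)$, track the meridian of $\alpha$ via Proposition~\ref{prop: MSigma handlebody}, and finish with Gordon--Luecke. But the one step with real content, that $c$ is the $+1$-framed longitude, is left as ``should give $1$''.

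Your fallback that $-1$ would serve equally well is not right for this statement. It only rescues the unknottedness of $\widetilde K$. The first sentence asserts that the specific filling along $c$, the one under which the meridian of $\alpha$ becomes $K'$, is $+1$-surgery. That is exactly what the proof of Theorem~\ref{thm: main thm gen} uses (Figure~\ref{fig: bing double}). Under $-1$-surgery the meridian of the band becomes a double of $K$ with the opposite clasp, which in general is not $W^n_K$.

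The method you suggest for fixing the sign is also not available at this point. Cabling annulus $\cup$ band is a Seifert surface for $\widetilde K$ only if the interior of $b$ misses the annulus. Here $b$, whose core is the arbitrary dual arc $\alpha$, may cross it. That $b$ can be pushed into a cabling annulus is Theorem~\ref{thm: band in annulus}, and applying that theorem requires the present proposition, so the route is circular.

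The missing computation is homological and is insensitive to how $\alpha$ sits:
\begin{enumerate}[(i)]
\item Since $c\subset\overline{S^3\setminus H_\alpha}\subset E_{\widetilde K}$, compute $[c]$ in $\overline{S^3\setminus H_\alpha}$. Closing the arcs $\overset{\circ}{l}_{K_0}$ and $\overset{\circ}{(m_{K_1}+l_{K_1})}$ into closed curves in $\Sigma_0,\Sigma_1$ changes $[c]$ only by a multiple of the meridian of $\alpha$. That meridian is isotopic to $\partial\Sigma_0$, hence bounds $\Sigma_0$ and is null-homologous. So $[c]=l_{K_0}-(m_{K_1}+l_{K_1})$.
\item The inclusion into $E_{L_K}$ is an isomorphism on $H_1$: you are reattaching a $2$-handle along that null-homologous meridian. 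In $E_{L_K}$ one has $l_{K_0}=l_{K_1}$, since both are fibres $\{p_i\}\times S^1$ of $P\times S^1$. Hence $[c]=-m_{K_1}$.
\item Orient $\widetilde K$ by extending $K_0$, equivalently $-K_1$ (Remark~\ref{rmk: banding unoriented}). Then $-m_{K_1}$ is a positive meridian $\mu$ of $\widetilde K$.
\item Since $c$ runs once positively along $\widetilde K$ (through $\overset{\circ}{l}_{K_0}$), on $\partial E_{\widetilde K}$ it is $\lambda+k\mu$, with $\lambda$ the Seifert longitude. The equality $[c]=\mu$ in $H_1(E_{\widetilde K})$ forces $k=1$, so the filling is $+1$-surgery.
\end{enumerate}
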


\begin{proof}
By Propositions \ref{prop: gluing handlebody gives S3} and \ref{prop: Ktilde}, gluing a solid torus to the exterior of $\widetilde{K}$ along the slope defined by $\widetilde{\psi}(\partial D_l)$ gives $S^3$. 
By Proposition \ref{prop: MSigma handlebody} this diffeomorphism sends the meridian of $\alpha$ to $K'$.
Since ${\psi}(l_{K_0})= m_{K_1} + l_{K_1}$, where we are adopting the same orientation conventions of Proposition \ref{prop: Ktilde}, we have that $\widetilde{\psi}(\partial D_l)$ is equal to 
$$ \overset{\circ}{l}_{K_0} \cup k_1 \cup k_2 \cup -\overset{\circ} {(m_{K_1}+ l_{K_1})}$$
where $k_1$ and $k_2$ are disjoint properly embedded arcs in the boundary of the tubular neighbourhood of $\alpha$ parallel to $j_1$ and $j_2$. 
Here \(\overset{\circ}{(m_{K_1}+ l_{K_1})} \) indicates the properly embedded oriented arc in $\Sigma_1$ corresponding to the oriented simple closed curve representing $m_{K_1}+ l_{K_1}$. Clearly the simple closed curve $\widetilde{\psi}(\partial D_l)$ is a longitude of $\widetilde{K}$ and therefore defines an integer framing of it, which we now determine. 

First, we orient $\widetilde{K}$ by extending the orientation of $K_0$ (equivalently, by extending the orientation of  $-K_1$).
Note that the curve $\widetilde{\psi}(\partial D_l)$ is homologous to $l_{K_0}-(m_{K_1}+l_{K_1})$ in $\overline{S^3\setminus H_\alpha}$. The inclusion of $\overline{S^3\setminus H_\alpha}$ into $E_{L_K}$ induces an isomorphism on the first homology groups, and, since $l_{K_0}$ and $l_{K_1}$ are isotopic in $E_{L_K}$, one has the equality $l_{K_0}-(m_{K_1}+l_{K_1})= -m_{K_1}$ in $H_1(\overline{S^3\setminus H_\alpha})$. Since $-m_{K_1}$ is a positive meridian for $-K_1$, its image in $E_{\widetilde{K}}$ is a positive meridian of $\widetilde{K}$.
As a consequence of \cite{GorLue}, $\widetilde{K}$ must be the unknot.
\end{proof}

\begin{figure}[]
    \centering
\includegraphics[width=0.7\textwidth]{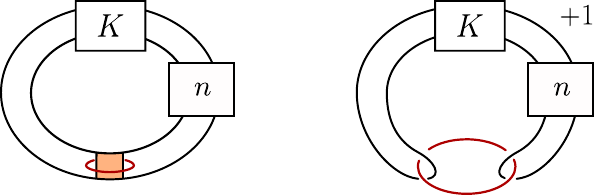}
    \caption{On the left: the band $b$ isotoped in a cabling annulus. On the right: The knot $K'$ is the image of the red curve in the $1$-surgery on $\widetilde{K}$. The box indicates a region containing $n$ right-handed full twists.}
    \label{fig: bing double}
\end{figure}

We are now ready to complete the proof of Theorem \ref{thm: main thm gen}.

\begin{proof}[Proof of Theorem \ref{thm: main thm gen}]
We start by noting that if $K$ is the unknot and $n=0$, then $W^n_K$ is the unknot, and the result follows from \cite[Corollary~8.3]{Gabaifol3}.

We can now assume that $K$ and $K'$ satisfy Assumption \ref{assum 2}.
In this case, by Proposition~\ref{prop: Ktilde is unknot} we know that $K'$ is the image of the meridian of the band $b$ in the $1$-surgery on $\widetilde{K}$, where $\widetilde{K}$ is the result of a banding of  the $(2,2n)$-cable $L_K$ of $K$ along the band $b$. By Remark~\ref{rmk: banding unoriented}, this is an oriented banding with respect to the orientation on $L_K$ induced by a cabling annulus. Hence, Theorem~\ref{thm: band in annulus} implies that the band $b$ can be isotoped in a cabling annulus $A$ in the exterior of $L_K$, and so we are in the situation described in Figure~\ref{fig: bing double}. Since the two components of the link in figure can be exchanged with an isotopy, we conclude that $K'$ is isotopic to $W_K^n$. 
\end{proof}

\bibliographystyle{amsalpha}
\bibliography{bibliography}

\providecommand{\bysame}{\leavevmode\hbox to3em{\hrulefill}\thinspace}
\providecommand{\MR}{\relax\ifhmode\unskip\space\fi MR }
\providecommand{\MRhref}[2]{%
  \href{http://www.ams.org/mathscinet-getitem?mr=#1}{#2}
}
\providecommand{\href}[2]{#2}
\begin{thebibliography}{KMOS07}

\bibitem[BS24a]{BStorusknot}
John~A Baldwin and Steven Sivek, \emph{Characterizing slopes for $5_2$}, Journal of the London Mathematical Society \textbf{109} (2024), no.~6, e12951.

\bibitem[BS24b]{BSinfinite}
John~A. Baldwin and Steven Sivek, \emph{Zero-surgery characterizes infinitely many knots}, Math. Res. Lett. \textbf{31} (2024), no.~6, 1639--1653. \MR{4862352}

\bibitem[BS25]{BSnonfib}
John~A Baldwin and Steven Sivek, \emph{Floer homology and non-fibered knot detection}, Forum of Mathematics, Pi, vol.~13, Cambridge University Press, 2025, p.~e1.

\bibitem[Bud05]{Budney2005JSJdecompositionsOK}
Ryan Budney, \emph{{JSJ-decompositions of knot and link complements in the 3-sphere}}, arXiv: Geometric Topology (2005).

\bibitem[EMn92]{Munoz}
Mario Eudave Mu\~noz, \emph{Band sums of links which yield composite links. {T}he cabling conjecture for strongly invertible knots}, Trans. Amer. Math. Soc. \textbf{330} (1992), no.~2, 463--501. \MR{1112545}

\bibitem[EN85]{EisNeu}
David Eisenbud and Walter~D. Neumann, \emph{{Three-dimensional link theory and invariants of plane curve singularities}}, Annals of Mathematics Studies, vol. 110, Princeton University Press, Princeton, 1985.

\bibitem[Gab87a]{gabaifol2}
David Gabai, \emph{{Foliations and the topology of 3-manifolds. II}}, Journal of Differential Geometry \textbf{26} (1987), no.~3, 461--478.

\bibitem[Gab87b]{Gabaifol3}
\bysame, \emph{{Foliations and the topology of 3-manifolds. III}}, Journal of Differential Geometry \textbf{26} (1987), no.~3, 479--536.

\bibitem[Gab87c]{gabai1987genus}
\bysame, \emph{Genus is superadditive under band connected sum}, Topology \textbf{26} (1987), no.~2, 209--210.

\bibitem[GL89]{GorLue}
Cameron~McA Gordon and John Luecke, \emph{Knots are determined by their complements}, Journal of the American Mathematical Society \textbf{2} (1989), no.~2, 371--415.

\bibitem[Hat23]{hatcher2007notes}
Allen Hatcher, \emph{Notes on basic 3-manifold topology}, Freely available at the website \url{https://pi.math.cornell.edu/~hatcher} (Version of 2023).

\bibitem[HPW25]{HPW25}
Kyle Hayden, Lisa Piccirillo, and Laura Wakelin, \emph{Dehn surgery functions are never injective}, arXiv preprint arXiv:2508.13369 (2025).

\bibitem[IBKR26]{K3}
R.~\.{I}nan\c{c} Baykur, Robion~C. Kirby, and Daniel Ruberman (eds.), \emph{K3---a new problem list in low-dimensional topology}, Mathematical Surveys and Monographs, vol. 295, American Mathematical Society, Providence, RI, [2026] \copyright 2026. \MR{5077826}

\bibitem[Jac80]{jaco1980lectures}
William~H Jaco, \emph{Lectures on three-manifold topology}, no.~43, American Mathematical Soc., 1980.

\bibitem[Joh79]{Johannson}
Klaus Johannson, \emph{Homotopy equivalences of {$3$}-manifolds with boundaries}, Lecture Notes in Mathematics, vol. 761, Springer, Berlin, 1979. \MR{551744}

\bibitem[JS79]{JacoShalen}
William~H. Jaco and Peter~B. Shalen, \emph{Seifert fibered spaces in {$3$}-manifolds}, Mem. Amer. Math. Soc. \textbf{21} (1979), no.~220, viii+192. \MR{539411}

\bibitem[Kir97]{kirby1997problems}
Rob Kirby, \emph{Problems in low-dimensional topology}, Proceedings 1993 Georgia International Topology Confence, Geometric Topology \textbf{2} (1997).

\bibitem[KM78]{KM78}
Robion Kirby and Paul Melvin, \emph{{Slice knots and property R}}, Inventiones mathematicae \textbf{45} (1978), no.~1, 57--59.

\bibitem[KMOS07]{KMOS07}
Peter Kronheimer, Tomasz Mrowka, Peter Ozsv{\'a}th, and Zolt{\'a}n Szab{\'o}, \emph{Monopoles and lens space surgeries}, Annals of mathematics (2007), 457--546.

\bibitem[KS26]{kegel2026search}
Marc Kegel and Jonathan Spreer, \emph{The search for exotic knot traces}, arXiv preprint arXiv:2603.22438 (2026).

\bibitem[Lac19]{Lac19}
Marc Lackenby, \emph{Every knot has characterising slopes}, Mathematische Annalen \textbf{374} (2019), no.~1, 429--446.

\bibitem[Mar25]{Martelli2016AnIT}
Bruno Martelli, \emph{An introduction to geometric topology}, Freely available at the website \url{https://people.dm.unipi.it/martelli/geometric_topology.html} (Version of September 2025).

\bibitem[OS19]{OSDehnsur19}
Peter Ozsv{\'a}th and Zolt{\'a}n Szab{\'o}, \emph{{The Dehn surgery characterization of the trefoil and the figure eight knot}}, Journal of Symplectic Geometry \textbf{17} (2019), no.~1, 251--265.

\bibitem[Pic20]{Pic20}
Lisa Piccirillo, \emph{The {Conway} knot is not slice}, Annals of Mathematics \textbf{191} (2020), no.~2, 581--591.

\bibitem[San24]{SanWhit}
Diego Santoro, \emph{{L--spaces, taut foliations and the Whitehead link}}, Algebraic \& Geometric Topology \textbf{24} (2024), no.~6, 3455--3502.

\bibitem[Wak26]{Wak}
Laura Wakelin, \emph{{Characterising slopes for hyperbolic knots and Whitehead doubles}}, Algebraic \& Geometric Topology \textbf{26} (2026), no.~2, 625--657.

\bibitem[Wal68]{WALDHAUSEN1967}
Friedhelm Waldhausen, \emph{Eine {K}lasse von 3-dimensionalen {M}annigfaltigkeiten. {II}}, Inventiones mathematicae \textbf{4} (1967/68), 87--117 (in German).

\end{thebibliography}

\end{document}